\definecolor{vegasgold}{rgb}{0.77, 0.7, 0.35}
\definecolor{darkgoldenrod}{rgb}{0.72, 0.53, 0.04}
\definecolor{gold(metallic)}{rgb}{0.83, 0.69, 0.22}
\DeclareFontFamily{U}{wncy}{}
\DeclareFontShape{U}{wncy}{m}{n}{<->wncyr10}{}
\DeclareSymbolFont{mcy}{U}{wncy}{m}{n}
\DeclareMathSymbol{\Sh}{\mathord}{mcy}{"58}
\newtheorem{theorem}{Theorem}[section]
\newtheorem{lemma}[theorem]{Lemma}
\newtheorem{ass}[theorem]{Assumption}
\newtheorem*{theorem*}{Theorem}
\newtheorem*{ass*}{Assumption}
\newtheorem{lthm}{Theorem}
\newtheorem{definition}[theorem]{Definition}
\newtheorem{corollary}[theorem]{Corollary}
\newtheorem{conjecture}[theorem]{Conjecture}
\newtheorem{proposition}[theorem]{Proposition}
\newcommand{\cF}{\mathcal{F}}
\newcommand{\cK}{\mathcal{K}}
\newcommand{\cG}{\mathcal{G}}
\newcommand{\ord}{\mathrm{ord}}
\newcommand{\cH}{\mathcal{H}}
\newcommand{\Z}{\mathbb{Z}}
\newcommand{\Q}{\mathbb{Q}}
\newcommand{\F}{\mathbb{F}}
\newcommand{\cO}{\mathcal{O}}
\newcommand{\Hom}{\mathrm{Hom}}
\newcommand{\coker}{\mathrm{coker}}
\newcommand{\op}[1]{\operatorname{#1}}
\newcommand\mtx[4] { \left( {\begin{array}{cc}
 #1 & #2 \\
 #3 & #4 \\
 \end{array} } \right)}
 \newcommand{\widebar}[1]{\mkern 2.5mu\overline{\mkern-2.5mu#1\mkern-2.5mu}\mkern 2.5mu}
\numberwithin{equation}{section}
\begin{document}

\title[On Greenberg's conjecture]{Remarks on Greenberg's conjecture for Galois representations associated to elliptic curves}

\author[A.~Ray]{Anwesh Ray}
\address[Ray]{Centre de recherches mathématiques,
Université de Montréal,
Pavillon André-Aisenstadt,
2920 Chemin de la tour,
Montréal (Québec) H3T 1J4, Canada}
\email{ar2222@cornell.edu}

\keywords{}
\subjclass[2020]{11R23, 11G05 (primary)}

\maketitle

\begin{abstract}
Let \( E \) be an elliptic curve defined over \( \mathbb{Q} \), and let \( p \) be an odd prime at which \( E \) has good ordinary reduction. Consider the \( p \)-primary Selmer group \( \operatorname{Sel}_{p^\infty}(\mathbb{Q}_\infty, E) \) over the cyclotomic \( \mathbb{Z}_p \)-extension of \( \mathbb{Q} \), and denote its (algebraic) \(\mu\)-invariant by \( \mu_p(E) \). Let \( \bar{\rho}_{E, p} : \operatorname{Gal}(\bar{\mathbb{Q}}/\mathbb{Q}) \to \operatorname{GL}_2(\mathbb{Z}/p\mathbb{Z}) \) be the Galois representation arising from the action of the absolute Galois group on the \( p \)-torsion of \( E \). Greenberg conjectured that if \( \bar{\rho}_{E, p} \) is reducible, then there exists a rational isogeny \( E \to E' \) of \( p \)-power degree such that \( \mu_p(E') = 0 \). In this article, we investigate this conjecture by establishing sufficient Galois-theoretic criteria for its validity, formulated in terms of the representation \( \bar{\rho}_{E, p} \). Our approach relies on a fundamental result of Coates and Sujatha concerning the fine Selmer group. Furthermore, in the case where \( \bar{\rho}_{E, p} \) is irreducible, we show that our conditions imply \( \mu_p(E) = 0 \) under the additional assumption that the classical Iwasawa \(\mu\)-invariant vanishes for the splitting field \( \mathbb{Q}(E[p]) = \bar{\mathbb{Q}}^{\ker \bar{\rho}_{E, p}} \).
\end{abstract}

\section{Introduction}
Iwasawa studied growth questions of class groups in certain infinite towers of number fields. Given a number field $F$ and a prime number $p$, let $F_{\infty}$ denote the cyclotomic $\Z_p$-extension of $F$. Letting $F_n\subset F_{\op{cyc}}$ be such that $[F_n:F]=p^n$, denote by $\op{Cl}_p(F_n)$ the $p$-Sylow subgroup of the class group of $F_n$. Iwasawa showed that there exists $n_0\in \Z_{\geq 0}$ such that for all $n\geq n_0$,
\begin{equation}\label{first equation} |\op{Cl}_p(F_n)|=p^{\left(p^n\mu_p(F)+\lambda_p(F) n+\nu_p(F)\right)},\end{equation} where $\mu_p(F), \lambda_p(F)\in \Z_{\geq 0}$ and $\nu_p(F)\in \Z$. We shall refer to $\mu_p(F)$ as the \emph{classical $\mu$-invariant} of $F$. Iwasawa conjectured that $\mu_p(F)=0$ for all prime numbers $p$ and all number fields $F$. This conjecture was proved by Ferrero and Washington \cite{ferrero1979iwasawa} for all abelian number fields. 
\par The Iwasawa theory of Galois representations arising from motives leads to very deep questions. Let $p$ be an odd prime number and $E_{/\Q}$ be an elliptic curve. Mazur \cite{mazur1972rational} initiated the Iwasawa theory of elliptic curves, and studied the algebraic structure of the Selmer group over the cyclotomic $\Z_p$-extension of $\Q$. The (algebraic) Iwasawa $\mu$-invariant $\mu_p(E)$ is defined in terms of the algebraic structure of this Selmer group, when viewed as a module over the Iwasawa algebra $\Lambda$. When $E$ has good ordinary reduction at $p$, these Selmer groups are known to be cotorsion as $\Lambda$-modules, thanks to the work of Kato \cite{kato2004p}. This property is especially favorable when it comes to studying the properties of the $\mu$-invariant. When the $\mu$-invariant vanishes, the Selmer group is cofinitely generated as a $\Z_p$-module. 

\par We recall a conjecture due to Greenberg \cite[Conjecture 1.11]{greenbergmain} on the vanishing of the $\mu$-invariant. Let $E[p]\subset E(\bar{\Q})$ be the $p$-torsion subgroup, we note that as an abstract abelian group, $E[p]$ is isomorphic to $(\Z/p\Z)^2$. Let \[\bar{\rho}_{E, p}:\op{Gal}(\bar{\Q}/\Q)\rightarrow \op{GL}_2(\Z/p\Z)\] be the representation of the absolute Galois group on $E[p]$. The following conjecture is of paramount importance and has influenced major trends in Iwasawa theory. 

\begin{conjecture}[Greenberg]\label{greenberg conj}
    Let $E_{/\Q}$ be an elliptic curve and $p$ a prime number at which $E$ has good ordinary reduction.
    \begin{enumerate}
        \item\label{p1 of Greenberg} Suppose that the Galois representation $\bar{\rho}_{E,p}$ is reducible. Then, $E$ is $\Q$-isogenous to an elliptic curve $E'$ such that $\mu_p(E')=0$. 
        \item Suppose that the Galois representation $\bar{\rho}_{E,p}$ is irreducible, then, $\mu_p(E)=0$. 
    \end{enumerate}
\end{conjecture}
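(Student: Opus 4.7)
The plan is to attack both parts of the conjecture through the theory of the fine Selmer group, following Coates and Sujatha. Recall that the fine Selmer group $\mathcal{R}_{p^\infty}(E/\Q_{\op{cyc}})$ differs from the classical $\Sel_{p^\infty}(E/\Q_{\op{cyc}})$ only by tightening the local condition at primes above $p$. The two are related by an exact sequence whose defect is a local Iwasawa module tied to the formal group of $E$, and a fundamental result of Coates--Sujatha asserts that $\mathcal{R}_{p^\infty}(E/\Q_{\op{cyc}})$ has vanishing $\mu$-invariant whenever the classical Iwasawa $\mu$-invariant of the splitting field $\Q(E[p])$ vanishes. The question in both parts thus reduces to controlling this local correction term.

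For part (1), the reducibility of $\bar{\rho}_{E,p}$ yields a $\op{Gal}(\bar{\Q}/\Q)$-stable line, hence a $\Q$-rational subgroup $C\subset E[p]$ of order $p$ and an isogeny $\phi\colon E \to E' = E/C$ of degree $p$. I would compare $\mu_p(E)$ and $\mu_p(E')$ via the maps on Selmer groups induced by $\phi$ and its dual, expressing the discrepancy as a sum of local contributions determined by the constituent characters of $\bar{\rho}_{E,p}^{\op{ss}}$ on the decomposition group at $p$. Iterating across the finite chain of $p$-power isogenous curves, one would then select the representative whose Galois-stable filtration at $p$ matches the unit/formal-group filtration forced by ordinary reduction; at that curve the local discrepancy should vanish, yielding $\mu_p(E')=0$. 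For part (2), assuming the Iwasawa conjecture $\mu_p(\Q(E[p]))=0$, the Coates--Sujatha theorem reduces us to showing $\Sel_{p^\infty}(E/\Q_{\op{cyc}})/\mathcal{R}_{p^\infty}(E/\Q_{\op{cyc}})$ has $\mu$-invariant zero. Under good ordinary reduction this quotient is controlled by the Iwasawa cohomology of the formal group $\widehat{E}$ over the completed cyclotomic local field at $p$, whose $\mu$-invariant is in turn governed by $H^1$ of $\bar{\rho}_{E,p}|_{G_{\Q_p}}$. A natural Galois-theoretic criterion is therefore a non-triviality condition on the unramified quotient character of this local representation, which via a Nakayama-style argument forces the local $\mu$-invariant to vanish; irreducibility of $\bar{\rho}_{E,p}$ enters to exclude degenerate split cases.

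The principal obstacle lies in part (1): even with a clean isogeny-by-isogeny formula for the change in $\mu$, there is no a priori reason that the minimum over the isogeny class should be zero, and a representative with vanishing local defect may fail to exist without additional input. The crux must therefore be the identification of a canonical choice of isogenous curve, dictated by the ordinary filtration, whose local term does vanish; absent such a mechanism the conjecture genuinely slips out of reach. In the irreducible case, a secondary obstacle is that the Coates--Sujatha input is itself conditional on the classical Iwasawa conjecture for the non-abelian extension $\Q(E[p])/\Q$, which is not covered by Ferrero--Washington, and this hypothesis is precisely what the abstract carries forward into the final statement.
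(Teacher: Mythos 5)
Your framework matches the paper's at a high level---fine Selmer group, Coates--Sujatha, isogeny chain in the reducible case---but the paper's actual argument has precise machinery where your sketch has placeholders, and those placeholders mark real gaps. The crux of the paper's reduction is not a local computation with formal-group Iwasawa cohomology and a ``Nakayama-style argument,'' but a descent to mod-$p$ residual Selmer groups together with a precise Galois-theoretic hypothesis: Assumption~\ref{main assumption} (the ordinary line $\bar{C}$ contains no nonzero $\op{G}_{\Q_\infty}$-submodule of $\bar{A}$) and Conjecture~\ref{main conjecture} (finiteness of the image of $G$-equivariant residual Selmer classes over $L_\infty$ in $\Hom(I_\beta,\bar{C})$). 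Under these, Theorem~\ref{main 1} shows the residual fine Selmer group has finite index in the residual Greenberg Selmer group, hence the two $\mu$-invariants vanish together. Your ``non-triviality condition on the unramified quotient character'' gestures at something like this but is a condition at $p$ alone, whereas the paper's criterion is a global constraint on $G$-equivariant homomorphisms out of $\op{Gal}(\Q_\Sigma/L_\infty)$ and never reduces to the decomposition group at $p$.

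For part~(1), you correctly flag that nothing a priori makes the minimum of $\mu$ over the isogeny class zero, and your appeal to the filtration ``forced by ordinary reduction'' does not fill that hole. The paper fills it with two concrete inputs your plan lacks. Via Schneider's formula $\mu_p(E)-\mu_p(E/\alpha)=\delta(\alpha)$, and the observation that for a type~1 representation (upper triangular with $\varphi_1$ odd) one has $H^0(\mathbb{R},\bar{C})=0$ so $\delta(\bar{C})=1$, quotienting by the ordinary line strictly lowers $\mu$ by one, and Theorem~\ref{mu equals zero condition} identifies the stopping point as type~2 or~3. Moreover the fine-Selmer $\mu=0$ input for the reducible case is \emph{unconditional}, not contingent on $\mu_p(\Q(E[p]))=0$: the paper passes to the abelian subfield $\mathcal{K}=\Q(\varphi_1,\varphi_2)$, applies Ferrero--Washington there, observes that $\op{Gal}(\mathcal{K}(E_{p^\infty})/\mathcal{K})$ is pro-$p$, and descends along the prime-to-$p$ group $\op{Gal}(\mathcal{K}_\infty/\Q_\infty)$ (Theorem~\ref{CS thm}). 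Your plan misses this and would leave part~(1) needlessly conditional. Your reading of part~(2) is accurate---$\mu_p(\Q(E[p]))=0$ must be assumed, which is exactly what Theorem~\ref{main thm 2} does. Finally, bear in mind the target statement is a conjecture: the paper does not prove it but reduces it to Conjecture~\ref{main conjecture} (plus the classical Iwasawa hypothesis for $L$ in the irreducible case).
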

There are examples of elliptic curves $E_{/\Q}$ for which $\bar{\rho}_{E,p}$ is reducible and for which $\mu_p(E)>0$. The first such example is due to Mazur, see \cite[section 9]{mazur1972rational}.
\subsection{Main results}
\par In this paper, we study Greenberg's conjecture from a new perspective. Before discussing the main results, let us introduce some further notation. Let $L_\infty$ be the cyclotomic $\Z_p$-extension of $L:=\Q(E[p])$ and set $G:=\op{Gal}(L_\infty/\Q_{\infty})$. Choose an embedding $\iota_p:\bar{\Q}\hookrightarrow \bar{\Q}_p$, and let $\tilde{\beta}$ be the prime of $\bar{\Q}$ that lies above $p$, prescribed by this embedding. Let $\beta$ be the prime of $L_\infty$ that lies below $\tilde{\beta}$, and $I_{\beta}$ be the inertia group at $\beta$. Let $\Sigma$ be a finite set of prime numbers that contains $p$ and the primes of bad reduction of $E$, and let $\Q_{\Sigma}$ be the maximal algebraic extension of $\Q$ unramified outside $\Sigma$. Since $E$ is assumed to be ordinary at $p$, there is a unique $\op{G}_p$-invariant line $\bar{C}$ contained in $E[p]$, such that the inertia group at $p$ acts on $\bar{C}$ via the mod-$p$ cyclotomic character. Note that the Galois group $\op{Gal}(\Q_\Sigma/L_\infty)$ acts trivially on $E[p]$. At a prime $\eta$ of $L_\infty$ that lies above a prime in $\Sigma$, let $f_{\eta}$ be the restriction of $f$ the decomposition group at $\eta$. The residual Selmer group over $L_\infty$ is defined as follows
\[\begin{split}\op{Sel}(L_\infty, E[p]):=&\{f\in \Hom\left(\op{Gal}(\Q_\Sigma/L_\infty), E[p]\right)\mid \\ & f_\eta=0\text{ for all primes }\eta\nmid p \text{ that lie above a prime of }\Sigma, \\ & \text{ and }f(I_{\beta})\subseteq \bar{C}\}.\end{split}\]
A homomorphism $f\in \Hom\left(\op{Gal}(\Q_\Sigma/L_\infty), E[p]\right)$ is $G$-equivariant if for any $g\in G$, 
\[f(\tilde{g} x\tilde{g}^{-1})=g f(x),\] where $\tilde{g}$ is a lift of $g$ to $\op{Gal}(\Q_\Sigma/\Q_\infty)$. Let $\op{Sel}(L_\infty, E[p])^G$ be the subgroup of $\op{Sel}(L_\infty, E[p])$ consisting Selmer homomorphisms that are $G$-equivariant. 
\begin{conjecture}\label{main conjecture}
        Suppose that $\bar{C}$ is not a $G$-stable submodule of $E[p]$. Then, the image of $\op{Sel}(L_\infty, E[p])^G$ in $\Hom(I_\beta, \bar{C})$ is finite.
\end{conjecture}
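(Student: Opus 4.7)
My plan is to use $G$-equivariance, together with the hypothesis that $\bar{C}$ is not $G$-stable, to force any $G$-equivariant Selmer homomorphism to vanish on $I_\beta$, thereby obtaining a trivial (hence finite) image. The structural observation I would begin with is that the decomposition subgroup $D_\beta \subseteq G$ at $\beta$ necessarily preserves $\bar{C}$: indeed, $\bar{C}$ is characterized as the unique line on which inertia at $p$ acts via the mod-$p$ cyclotomic character, and $D_\beta$ is the image in $G$ of a local decomposition group at $p$. Hence any $g \in G$ with $g\bar{C} \neq \bar{C}$—such a $g$ exists by hypothesis—must lie outside $D_\beta$, and so $\beta' := g \beta$ is a prime of $L_\infty$ above $p$ distinct from $\beta$. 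Moreover $\bar{C}$ and $g \bar{C}$ are two distinct lines in the $2$-dimensional $\F_p$-space $E[p]$, so $\bar{C} \cap g \bar{C} = 0$.

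Next, let $f \in \op{Sel}(L_\infty, E[p])^G$ and pick a lift $\widetilde{g} \in \op{Gal}(\Q_\Sigma/\Q_\infty)$ of $g$. Conjugation by $\widetilde{g}$ sends $I_\beta$ to an inertia subgroup $I_{\beta'}$, and $G$-equivariance gives
\[
f(\widetilde{g}\, I_\beta\, \widetilde{g}^{-1}) \;=\; g \cdot f(I_\beta) \;\subseteq\; g \bar{C}.
\]
On the other hand, the Selmer condition propagated along the $G$-orbit of $\beta$ pins $f(I_{\beta'})$ into the fixed line $\bar{C}$ as well. Intersecting these two containments forces $f(\widetilde{g}\, I_\beta\, \widetilde{g}^{-1}) \subseteq \bar{C} \cap g \bar{C} = 0$, and since $g$ acts as an automorphism of $E[p]$, this gives $f(I_\beta) = 0$. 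Thus the restriction map $\op{Sel}(L_\infty, E[p])^G \to \Hom(I_\beta, \bar{C})$ is zero, in particular has finite image.

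The principal obstacle is justifying the Selmer condition at $\beta'$, which the definition literally imposes only at $\beta$. I expect the intended reading to be that the residual Selmer group carries a coherent local condition along the $G$-orbit of primes above $p$, enforced through $G$-equivariance, and that the asymmetry in the definition is a matter of notation: the distinguished line $\bar{C}$ is fixed throughout, and the condition at the other primes above $p$ is inherited by compatibility. If this reading is not available, the fallback is a purely local argument: show that $f|_{I_\beta}$ is automatically $D_\beta$-equivariant, and then prove that $\Hom_{D_\beta}(I_\beta, \bar{C})$ is finite by analyzing the $\bar{\psi}$-isotypic component of $\mathcal{O}_{L_{\infty,\beta}}^\times/p$, where $\bar{\psi}: D_\beta \to \F_p^\times$ is the character describing the $D_\beta$-action on $\bar{C}$. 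This latter strategy would naturally invoke the Coates–Sujatha theorem on the fine Selmer group referenced in the abstract.
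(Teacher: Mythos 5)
The statement you were asked to prove is labeled \emph{Conjecture}~\ref{main conjecture} in the paper; it is not a theorem, it carries no proof, and it is used throughout as an unproved hypothesis (see Theorems~\ref{main thm}, \ref{main thm 2}, \ref{thm 6.2}, and the generalization in Conjecture~\ref{main conjecture generalized}). So there is no ``paper's proof'' to compare against. If your argument were sound, it would settle the paper's central conjecture, which should itself be a warning sign.

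The central step of your main argument is incorrect. You claim that the Selmer condition ``propagated along the $G$-orbit of $\beta$'' forces $f(I_{\beta'})\subseteq \bar C$ for $\beta'=g\beta$, and then you intersect with the containment $f(\widetilde g\,I_\beta\,\widetilde g^{-1})\subseteq g\bar C$ from $G$-equivariance to get zero. But the definition of $\op{Sel}(L_\infty,E[p])$ imposes a ramification condition only at the distinguished prime $\beta$, not at $g\beta$; and if one did try to write down a ``coherent'' local condition along the $G$-orbit, the correct condition at $g\beta$ would involve the translated line $g\bar C$, not the fixed line $\bar C$. This is because $\bar C$ is singled out by the embedding $\iota_p$, which fixes $\widetilde\beta$; changing $\beta$ to $g\beta$ corresponds to changing the embedding, hence changing the distinguished local line to $g\bar C$. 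So the two containments you are intersecting are $f(\widetilde g I_\beta \widetilde g^{-1})\subseteq g\bar C$ (from equivariance) and $f(I_{g\beta})\subseteq g\bar C$ (the would-be Selmer condition at $g\beta$) --- these are the \emph{same} containment, not a contradiction. The conclusion $f|_{I_\beta}=0$ does not follow.

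Your fallback suggestion of proving $\Hom_{D_\beta}(I_\beta,\bar C)$ is finite cannot work as stated either, because it never uses the hypothesis that $\bar C$ is not $G$-stable, and that hypothesis is essential: the paper's discussion of type-1 representations (where $\bar C$ \emph{is} $\op G_\Q$-stable, hence $G$-stable) explicitly produces examples with $\mu_p(E)>0$, which by Theorem~\ref{main 1} and Proposition~\ref{prop 3.3} means the conclusion of the conjecture can fail when the hypothesis is dropped. Any purported proof that is insensitive to whether $\bar C$ is $G$-stable must be wrong. In short, the conjecture remains open, and your argument does not establish it.
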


Thus, if $\bar{C}$ is not a $G$-stable submodule of $E[p]$, then this poses a restriction on the ramification of $\bar{C}$ for $G$-equivariant Selmer classes. When $\bar{C}$ is a $G$-submodule of $E[p]$, it can lead to the $\mu$ invariant being non-zero. All the examples where the $\mu$-invariant is positive in the literature involve situations where $\bar{C}$ is a $G$-submodule of $E[p]$. This leads provides some further evidence for the conjecture. The above is a Galois theoretic criterion expressed purely in terms of the residual representation, which as we shall see gives a different formulation of Greenberg's conjectures. 
In establishing our result, we crucially leverage results of Coates and Sujatha on the vanishing of the $\mu$-invariant of the \emph{fine Selmer group}. We prove the first part of Conjecture \ref{greenberg conj}, provided Conjecture \ref{main conjecture} holds for the $\Q$-isogeny class of $E$.
\begin{lthm}\label{main thm}
    Let $E$ be an elliptic curve over $\Q$ with good ordinary reduction at $p$ such that $E[p]$ is reducible. Assume that the Conjecture \ref{main conjecture} is true for all elliptic curves over $\Q$ that are isogenous to $E$. Then, there exists an elliptic curve $E'$ over $\Q$ that is isogenous to $E$, such that the $\mu$-invariant of $\op{Sel}_{p^\infty}(\Q_\infty, E')$ is $0$. Moreover, the isogeny $E\rightarrow E'$ has degree $p^{\mu_p(E)}$.
\end{lthm}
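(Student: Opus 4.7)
The plan is to induct on $m := \mu_p(E)$; at each step I aim to produce a $\Q$-rational $p$-isogeny $\phi \colon E \to E_1$ with $\mu_p(E_1) = m - 1$. Since $\bar{\rho}_{E_1,p}$ remains reducible ($E_1$ being $\Q$-isogenous to $E$), iterating yields a chain of degree-$p$ isogenies of total degree $p^m$ ending at some $E'$ with $\mu_p(E') = 0$. The base case $m = 0$ is trivial (take $E' = E$).

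For the inductive step with $m \geq 1$, the first sub-goal is to force the ordinary line $\bar C \subset E[p]$ to be $\op{Gal}(\bar\Q/\Q)$-stable. Supposing otherwise, and after possibly replacing $E$ by an appropriate curve in its $\Q$-isogeny class so that one enters a setting where $\bar C$ additionally fails to be $G$-stable, I invoke Conjecture \ref{main conjecture} to obtain that the image of $\op{Sel}(L_\infty, E[p])^G$ in $\Hom(I_\beta, \bar C)$ is finite. The Coates--Sujatha theorem supplies $\mu^{\op{fn}}_p(E) = 0$, so $\op{Sel}^{\op{fn}}_{p^\infty}(\Q_\infty, E)[p]$ is finite; a standard inflation-restriction descent (with finite kernel and cokernel) translates this to finiteness of $\op{Sel}^{\op{fn}}(L_\infty, E[p])^G$. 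The defining exact sequence
\[
0 \to \op{Sel}^{\op{fn}}(L_\infty, E[p])^G \to \op{Sel}(L_\infty, E[p])^G \to \Hom(I_\beta, \bar C)
\]
then forces $\op{Sel}(L_\infty, E[p])^G$ to be finite, and descending back to $\Q_\infty$ yields $\mu_p(E) = 0$ — contradicting $m \geq 1$.

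With $\bar C$ now $\op{Gal}(\bar\Q/\Q)$-stable, set $E_1 := E/\bar C$, so that $\phi \colon E \to E_1$ is a $\Q$-rational $p$-isogeny. To compute $\mu_p(E) - \mu_p(E_1)$, apply the snake lemma to the comparison diagram for $\op{Sel}_{p^\infty}(\Q_\infty, -)$ induced by $\phi$ and its dual $\hat\phi$, and track the local contribution at $p$: since $\ker\phi \cong \bar C$ with inertia acting via the mod-$p$ cyclotomic character (i.e.\ $\bar C \cong \mu_p$), this contribution reduces to the $\mu$-invariant of the local cohomology of $\Q_p/\Z_p(1)$ as a $\Lambda$-module, which equals $1$. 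Hence $\mu_p(E_1) = m - 1$ and the induction closes; the composite isogeny has degree $p^m$ by construction.

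The principal obstacle is the intermediate configuration in which $\bar C$ is $G$-stable but not $\op{Gal}(\bar\Q/\Q)$-stable — here Conjecture \ref{main conjecture} does not apply directly to $E$. Since the hypothesis covers the entire $\Q$-isogeny class, the intended remedy is to route the argument through $E^\dagger := E/\bar D$, where $\bar D$ is the globally stable line distinct from $\bar C$: the ordinary line of $E^\dagger$ is a different subspace of $E^\dagger[p]$, for which $G^\dagger$-stability may fail and to which the conjecture may then be applied. Verifying that this detour always closes the argument will require a careful case analysis of how $\bar\rho_{E,p}$ can restrict to $G$ within a Borel subgroup, constrained by $\det \bar\rho_{E,p} = \omega$ and by the ordinary hypothesis at $p$.
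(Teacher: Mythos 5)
Your overall strategy matches the paper's: iterate quotients by the ordinary line $\bar{C}$ and track the $\mu$-drop via an isogeny formula. But there are two genuine gaps, both of which the paper closes via its classification of $\bar{\rho}_{E,p}$ into types 1, 2, 3 (Theorem \ref{mu=0 condition}) and via Proposition \ref{assumption ell curve}.

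First, the claim $\mu_p(E_1) = m-1$ is not correct whenever $\bar{C}$ is $\op{G}_{\Q}$-stable. Schneider's isogeny formula gives
\[
\mu_p(E) - \mu_p(E_1) = \delta(\bar{C}) = \ord_p|\bar{C}^+| - \ord_p|H^0(\mathbb{R},\bar{C})|,
\]
and the archimedean term you omit is decisive: if the character $\varphi_1$ on $\bar{C}$ is even, then $H^0(\mathbb{R},\bar{C}) = \bar{C}$ and $\delta(\bar{C}) = 0$, so the quotient by $\bar{C}$ does not lower $\mu$ at all. Your parenthetical ``$\bar{C}\cong\mu_p$'' quietly assumes $\varphi_1$ is odd, but nothing in your argument establishes this. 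The paper closes this by Greenberg--Vatsal: if $\bar{C}$ is $\op{G}_\Q$-stable with $\varphi_1$ even (type 2), then $\mu_p(E)=0$ outright, so this case never arises when $m\geq 1$. You never invoke Greenberg--Vatsal, so your induction step can stall.

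Second, you correctly flag the intermediate configuration ``$\bar{C}$ is $G$-stable but not $\op{G}_\Q$-stable'' as the main obstruction, but the repair you propose (routing through $E^\dagger := E/\bar{D}$) is not the paper's fix and does not obviously close: $\bar{D}$ is a quotient rather than a subobject, and even after reinterpreting it as the other stable line, you would then face the same oddness/evenness dichotomy there. The paper shows directly that the intermediate configuration cannot occur: Proposition \ref{assumption ell curve}\,(2) proves, via the inflation--restriction sequence applied to the cocycle $[\beta]$ defining the extension class (using that $\varphi_2\varphi_1^{-1}$ remains nontrivial on $\op{G}_{\Q_\infty}$ because $\Gamma$ is pro-$p$ and the character has order prime to $p$), that a type-3 representation stays indecomposable over $\Q_\infty$, hence $\bar{C}$ is not $G$-stable whenever it fails to be $\op{G}_\Q$-stable. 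Supplying this lemma, and inserting the Greenberg--Vatsal input, would turn your sketch into the paper's actual argument.
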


Thus we have provided an explicit criterion for the validity of Greenberg's conjecture. Moreover, we show that the property that $\mu_p(E)$ vanishes can be detected precisely from the structure of the representation $\bar{\rho}_{E,p}$, see Theorem \ref{mu equals zero condition}. We prove that the second part of this conjecture follows from certain additional conditions.
\begin{lthm}\label{main thm 2}
    Let $E_{/\Q}$ be an elliptic curve with good ordinary reduction at an odd prime $p$ and assume that
    \begin{enumerate}
        \item the residual representation $\bar{\rho}_{E, p}$ is irreducible.
        \item The Conjecture \ref{main conjecture} holds for $E$.
         \item The classical Iwasawa $\mu$-invariant $\mu_p(L)$ vanishes.
    \end{enumerate}
    Then, the $\mu$-invariant of the Greenberg Selmer group $\op{Sel}_{p^\infty}(\Q_\infty, E)$ is $0$.
\end{lthm}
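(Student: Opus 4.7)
\smallskip
\noindent\textbf{Proof plan.}
The strategy is to reduce the vanishing of $\mu_p(E)$ to the finiteness of a suitable residual Selmer group, and then to control that group by means of the three hypotheses. Since $\bar{\rho}_{E,p}$ is irreducible, the group $E(\Q_\infty)[p^\infty]$ is trivial, so a standard Kummer-theoretic comparison identifies $\op{Sel}_{p^\infty}(\Q_\infty, E)[p]$ with $\op{Sel}(\Q_\infty, E[p])$ up to finite error; consequently $\mu_p(E)=0$ is equivalent to the finiteness of $\op{Sel}(\Q_\infty, E[p])$. The inflation--restriction sequence for the finite group $G = \op{Gal}(L_\infty/\Q_\infty)$ shows that $\op{Sel}(\Q_\infty, E[p])$ embeds, with finite kernel and cokernel, into $\op{Sel}(L_\infty, E[p])^G$, so it suffices to prove that this last group is finite.

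To bound $\op{Sel}(L_\infty, E[p])^G$, I would fit $\op{Sel}(L_\infty, E[p])$ into the exact sequence
\[
0 \longrightarrow \mathcal{F}(L_\infty, E[p]) \longrightarrow \op{Sel}(L_\infty, E[p]) \longrightarrow \bigoplus_{\beta \mid p} \Hom(I_\beta, \bar{C}),
\]
where $\mathcal{F}(L_\infty, E[p])$ denotes the \emph{fine residual Selmer group}, obtained by imposing strict vanishing at every prime above $\Sigma$, including those above $p$. Taking $G$-invariants, the proof reduces to bounding both the kernel and the image. For the image, assumption (1) forces no line of $E[p]$---and in particular $\bar{C}$---to be $G$-stable, so assumption (2) allows us to invoke Conjecture \ref{main conjecture}, which asserts precisely that the image of $\op{Sel}(L_\infty, E[p])^G$ in $\bigoplus_\beta \Hom(I_\beta, \bar{C})$ is finite.

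For the kernel, I would identify $\mathcal{F}(L_\infty, E[p])^G$, up to finite error, with the $G$-invariants of the $p$-torsion of the fine Selmer group $\mathcal{R}_{p^\infty}(E/L_\infty)$. The theorem of Coates and Sujatha then applies: assumption (3) implies the vanishing of the $\mu$-invariant of $\mathcal{R}_{p^\infty}(E/L_\infty)$, so its $p$-torsion, and a fortiori the $G$-invariants thereof, is finite. Combining this with the previous paragraph yields the finiteness of $\op{Sel}(L_\infty, E[p])^G$, and hence $\mu_p(E)=0$.

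The principal technical obstacle will be the careful bookkeeping of the finite kernels and cokernels introduced at each comparison---passing between $\op{Sel}_{p^\infty}(\Q_\infty, E)[p]$ and the mod-$p$ Selmer group, descending from $L_\infty$ to $\Q_\infty$ via $G$-invariants, and identifying $\mathcal{F}(L_\infty, E[p])^G$ with $\mathcal{R}_{p^\infty}(E/L_\infty)[p]^G$ up to finite error. These comparisons all ultimately hinge on the irreducibility of $\bar{\rho}_{E,p}$ (giving $E(\Q_\infty)[p^\infty]=0$) and on the finiteness of $G$; the genuinely deep input is the Coates--Sujatha theorem, whose precise form relating the algebraic $\mu$-invariant of the fine Selmer group to the classical $\mu$-invariant $\mu_p(L)$ must be invoked with care.
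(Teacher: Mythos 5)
Your plan matches the paper's proof: it reduces $\mu_p(E)=0$ to finiteness of the residual Selmer group (Propositions \ref{basic mu=0 criterion} and \ref{prop 3.3}), splits that group into a residual fine Selmer piece (finite by Coates--Sujatha together with $\mu_p(L)=0$, via Propositions \ref{prop 4.2} and the restriction to $L_\infty$) and an image in $\Hom(I_\beta,\bar C)$ (finite by Conjecture \ref{main conjecture}, which applies because irreducibility forces Assumption \ref{main assumption} via Proposition \ref{assumption ell curve}). Two small cautions: irreducibility is not in fact what makes the Kummer comparison between $\op{Sel}_{p^\infty}(\Q_\infty,E)[p]$ and $S_{\bar A}(\Q_\infty)$ have finite error---the kernel of $g$ in Lemma \ref{g finite kernel and cokernel lemma} is $H^0(\Q_\infty,A)/\varpi H^0(\Q_\infty,A)$, finite unconditionally since $H^0(\Q_\infty,A)$ is cofinitely generated over $\cO$; its genuine role is to preclude $\bar C$ from being $\op{G}_{\Q_\infty}$-stable, and you should also note that Theorem \ref{CS main result} requires $L(E_{p^\infty})/L$ to be pro-$p$, which holds automatically since $L=\Q(E[p])$ and $\ker\bigl(\op{GL}_2(\Z_p)\to\op{GL}_2(\F_p)\bigr)$ is a pro-$p$ group.
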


The main novelty in the above results, is that the algebraic structure of the fine Selmer group plays an important role in establishing them. Indeed, the Conjecture \ref{main conjecture} gives a precise criterion for the fine Selmer group and the Greenberg Selmer group to have the same $\mu$-invariant. We explain this in greater detail below.

\subsection{Method of proof} The results are proven by proving an explicit relationship between the Selmer group over $\Q_\infty$ and the fine Selmer group. This latter Selmer group is defined by imposing strict conditions at the prime above $p$. The algebraic properties of the fine Selmer group closely resemble those of class groups, and the $\mu$-invariants of these Selmer groups are always conjectured to vanish. Suppose that the Conjecture \ref{main conjecture} holds for all elliptic curve that are isogenous to $E$. Then, we show that if either $\bar{\rho}_{E, p}$ is irreducible, or if $\bar{\rho}_{E,p}$ is reducible and satisfies some further conditions, then the $\mu$-invariant of the Greenberg Selmer group vanishes if and only if the $\mu$-invariant of the fine Selmer group vanishes. We show that any elliptic curve $E$ for which $\bar{\rho}_{E, p}$ is reducible is isogenous to another elliptic curve $E'$ for which these additional conditions on $\bar{\rho}_{E',p}$ are satisfied. The results of Coates-Sujatha \cite{coates2005fine} and Ferrero-Washington \cite{ferrero1979iwasawa} 
 together imply that the $\mu$-invariant of the fine Selmer group vanishes when the residual representation is reducible (cf. Theorem \ref{CS thm}). We leverage the vanishing of the $\mu$-invariant of the fine Selmer group to deduce that the $\mu$-invariant of $E'$ vanishes, thus proving Theorem \ref{main thm}. The Theorem \ref{main thm 2} is proven via a similar technique. It is shown that if $\bar{\rho}_{E,p}$ is irreducible, then Greenberg's conjecture is equivalent to a conjecture of Coates and Sujatha on the vanishing of the $\mu$-invariant of the fine Selmer group (cf. Corollary \ref{cor 5.5}). The results establishing the relationship between the Greenberg Selmer group and the fine Selmer group hold in a more general context, namely, for ordinary Galois representations (cf. Theorem \ref{main 1}). 

 \subsection{Outlook} The methods developed in this paper should lead to interesting generalizations of Greenberg's conjecture for ordinary Galois representations associated to modular forms and abelian varieties. Such questions have not been pursued in this paper, however would certainly be of interest to study in the future. It is of natural interest to ascertain if the statement Theorem \ref{main thm 2} generalizes in some sense to the case of Kobayashi's signed Selmer groups \cite{kobayashi2003iwasawa} associated to elliptic curves with supersingular reduction at $p$. The main difficulty here is that Kobayashi's Selmer groups have not been defined over fields other that the rational numbers in which $p$ does not split completely. We work with a suitably defined primitive residual Selmer group considered over the cyclotomic $\Z_p$-extension of $L$.

\subsection{Related work} Let us discuss related work towards Greenberg's conjecture in the residually reducible case. Greenberg and Vatsal \cite{greenberg2000iwasawa} showed that if $\bar{\rho}_{E,p}$ is reducible and $E[p]$ contains a $1$-dimensional line which is $\op{G}_{\Q}$-stable on which the action is via a character which is either 
\begin{enumerate}
    \item unramified at $p$ and odd,
    \item ramified at $p$ and even,
\end{enumerate}
then, $\mu_p(E)=0$. The case that remains is when the Galois representation $\bar{\rho}_{E,p}$ is indecomposable of the form $\mtx{\varphi_1}{0}{\ast}{\varphi_2}$, where $\varphi_2$ is unramified at $p$ and even. In this case, Greenberg conjectures that $\mu_p(E)=0$. The conjecture has been studied for $p=3$ by Hachimori in the case when $E(\Q)[3]\neq 0$ (i.e., $\varphi_2=1$), cf. \cite[Theorem 1.1]{hachimori2004mu}. On the other hand, Trifkovic \cite{trifkovic2005vanishing} constructs infinitely many examples for $p=3,5$ of elliptic curves $E_{/\Q}$ for which $\bar{\rho}_{E,p}$ is of the form $\mtx{\varphi_1}{0}{\ast}{\varphi_2}$ described above, and such that $\mu_p(E)=0$. The case of interest falls under "type 3" under the classification in section \ref{s 6}. 
\par We note here that the methods developed in this paper do not immediately generalize to elliptic curves over arbitrary number fields, since the arguments make use of the fact that there is only one prime of $\Q_\infty$ that lies above $p$. For a number field in which $p$ splits into $2$ or more primes, the same reasoning no longer applies. In fact, Drinen \cite{drinen2003iwasawa} proved that there are large enough number fields over which the analogue of part \eqref{p1 of Greenberg} of Greenberg's conjecture does not hold.

\subsection{Organization} Including the introduction, the article consists of 6 sections. In section \ref{s 2}, we introduce basic notation and conventions, introduce the Greenberg Selmer group associated with an ordinary Galois representations. These Selmer groups are modules over the Iwasawa algebra, the structure theory of such modules leads to the definition of the $\mu$-invariant. When the Selmer group is cotorsion over the Iwasawa algebra, the $\mu$-invariant vanishes precisely when it is cofinitely generated as a $\Z_p$-module. We conclude section \ref{s 2} by discussing the relationship between the Bloch-Kato Selmer group and the Greenberg Selmer group. It turns out that for elliptic curves $E_{/\Q}$, the Greenberg Selmer group coincides with the classical Selmer group for which the local conditions are defined via Kummer maps. The Greenberg Selmer groups are however more convenient to work with when employing Galois cohomological arguments. 
\par In section \ref{s 3}, we introduce a Selmer group associated to the residual representation. Such Selmer groups were considered by Greenberg and Vatsal in \cite{greenberg2000iwasawa} in studying the role of congruences between elliptic curves in Iwasawa theory. In \emph{loc. cit.}, certain imprimitive residual Selmer groups are defined, i.e., the local conditions at a number of primes are not imposed on these Selmer groups. It is necessary to work with such imprimitive conditions when studing the effect of congruences on the $\lambda$-invariant. In this article however, we only study the $\mu$-invariant, and therefore work with primitive residual Selmer groups. The section ends with Proposition \ref{prop 3.3}, which shows that the $\mu$-invariant vanishes for the Greenberg Selmer group of an ordinary representation if and only if the (primitive) residual Selmer group is finite. Such a result is well known to the experts, however, we do include it for completeness.
\par The section \ref{s 4} is devoted to the definition and basic properties of the fine Selmer group associated to a continuous Galois representation. We discuss a conjecture of Coates and Sujatha on the vanishing of the $\mu$-invariant. We end section \ref{s 4} by recalling a key result on the vanishing of this $\mu$-invariant.
\par In section \ref{s 5} we introduce a key assumption on the residual representation. This assumption allows us to relate the Greenberg Selmer group to the fine Selmer group. It is shown that the residual fine Selmer group is of finite index in the residual Greenberg Selmer group, provided the residual representation satisfies the additional hypothesis. It the end of this section \ref{s 5}, we give a proof of Theorem \ref{main thm 2}.
\par Finally, section \ref{s 6} is devoted to the proof of Theorem \ref{main thm}. We give a classification of the residual representation into three types. It is shown that the $\mu$-invariant is positive when the residual representation is of type 1, and is zero when it is of type 2 or 3. It is in the case when the representation is of type 3 that the results from section \ref{s 5} are applied. We then recall a theorem of Schneider, which describes the difference between the $\mu$-invariants of isogenous elliptic curves. We use this result, along with our classification theorem to prove Theorem \ref{main thm}. 

\subsection*{Acknowledgments} The author's research is supported by the CRM Simons postdoctoral fellowship. He thanks Jeffrey Hatey, Antonio Lei and Shaunak V. Deo for some helpful comments. He also thanks the referee for helpful suggestions and corrections that have benefited the article.

\subsection*{Data availability} No data was analyzed in this work.

\subsection*{Conflict of interest} There is no conflict of interest.

\section{Notation and Preliminaries}\label{s 2}
\subsection{Notation}
In this section, we introduce some standard notation.
\begin{itemize}
    \item Throughout $p$ will denote an odd prime number and $K/\Q_p$ a finite extension.
    \item Let $\F_p$ be the field with $p$ elements.
    \item Denote by $\cO$ the valuation ring in $K$. Let $\varpi$ be a uniformizer of $\cO$ and set $\F:=\cO/\varpi$. 
    \item Let $\bar{\Q}$ be an algebraic closure of $\Q$. For a subfield $F$ of $\bar{\Q}$, we set $\op{G}_F$ to denote the absolute Galois group $\op{Gal}(\bar{\Q}/F)$. We make the convention that all algebraic extensions of $\Q$ considered are contained in $\bar{\Q}$.
    \item For each prime $\ell$, set $\op{G}_\ell:=\op{Gal}(\bar{\Q}_\ell/\Q_\ell)$. Choose an embedding $\iota_\ell: \bar{\Q}\hookrightarrow \bar{\Q}_\ell$; set $\iota_\ell^*: \op{G}_\ell\hookrightarrow \op{G}_\Q$ to denote the induced inclusion.
    \item Given a finite set of prime numbers $\Sigma$, set $\Q_\Sigma$ to be the maximal algebraic extension of $\Q$ in which all primes $\ell\notin \Sigma$ are unramified.
    \item Let $\cF$ be an extension of $\Q$ contained in $\Q_\Sigma$, set $H^i(\Q_\Sigma/\cF, \cdot):=H^i\left(\op{Gal}(\Q_\Sigma/\cF), \cdot\right)$.
    \item Let $\mu_{p^n}\subset \bar{\Q}$ denote the $p^n$-th roots of unity, and $\Q(\mu_{p^n})$ be the cyclotomic field generated by $\mu_{p^n}$. Set $\Q(\mu_{p^\infty})$ to denote the union of number fields $\Q(\mu_{p^n})$ for $n\in \Z_{\geq 1}$. 
    \item Set $\mathcal{G}_\infty:=\op{Gal}(\Q(\mu_{p^\infty})/\Q)$ and $\chi: \mathcal{G}_\infty\xrightarrow{\sim}\Z_p^\times$ be the $p$-adic cyclotomic character. 
    \item Denote by $\Q_\infty$ the unique $\Z_p$-extension of $\Q$ that is contained in $\Q(\mu_{p^\infty})$, this is the \emph{cyclotomic $\Z_p$-extension of $\Q$}. 
    \item Set $\Gamma:=\op{Gal}(\Q_\infty/\Q)$ and $\Delta:=\op{Gal}(\Q(\mu_p)/\Q)$, note that $\mathcal{G}_\infty\simeq \Delta\times \Gamma$. 
    \item For $n\in \Z_{\geq 1}$, let $\Q_n$ be the subfield of $\Q_\infty$ with $\op{Gal}(\Q_\infty/\Q_n)= \Gamma^{p^n}$. Identify $\op{Gal}(\Q_n/\Q)$ with $\Gamma_n:=\Gamma/\Gamma^{p^n}$.
        \item Given a prime $\eta$ of $\Q_\infty$, denote by $\Q_{\infty, \eta}$ the union of completions of the finite layers $\Q_n$ at $\eta$.
        \item Let $\eta_p$ be the unique prime of $\Q_\infty$ that lies above $p$.
    \item The \emph{Iwasawa algebra} $\Lambda$ over $\cO$ is the completed group algebra $\varprojlim_n \cO[\Gamma_n]$. 
    \item Let $\gamma$ be a topological generator of $\Gamma$, setting $T:=(\gamma-1)$ we identify $\Lambda$ with the formal power series ring $\cO\llbracket T\rrbracket$.  
\end{itemize}

\subsection{The Greenberg Selmer group}

\par We recall the definition of the Greenberg Selmer group associated to an ordinary Galois representation. These Selmer groups are considered over $\Q_\infty$, and were introduced in \cite{greenbergmain}. We follow the notation and conventions in \cite{greenberg2000iwasawa}. Let $n\geq 2$ be an integer, and $M$ be a free $\cO$-module of rank $n$, equipped with a continuous $\cO$-linear action of $\op{G}_\Q$. Choose an $\cO$-basis for $M$, and identify the group of $\cO$-linear automorphisms of $M$ with $\op{GL}_n(\cO)$. The Galois action on $M$ is encoded by a continuous Galois representation 
\[\rho_{M}:\op{G}_\Q\rightarrow \op{GL}_n(\cO). \] Tensoring with $K$, we obtain the representation on the $K$-vector space $M_K:=M\otimes_{\cO} K$, denoted
\[\rho_{M_K}: \op{G}_\Q\rightarrow \op{GL}_n(K). \] 

Set $d:=\dim_K M_K$, and $d^\pm$ to denote the $K$-dimensions of the plus and minus eigenspaces of $M_K$ for the action of complex conjugation. We set $A:=M_K/M$; we take note of the fact that $A\simeq (K/\cO)^d$. 

\begin{ass}\label{basic ass}
    With respect to notation above, assume that 
    \begin{enumerate}
        \item there are finitely many prime numbers at which $\rho_M$ is ramified.
        \item The representation $\rho_{M_K}$ is irreducible. 
        \item There exists a $\op{G}_p$-stable $K$-subspace $W$ of $M_K$ of dimension $d^+$ such that the action of $\op{G}_p$ on $M_K/W$ is unramified.
    \end{enumerate}
\end{ass}

A Galois representation satisfying the conditions of \cite[p.98]{greenbergmain} is referred to as \textit{ordinary} and satisfies the above assumption. Let $C$ to be the image of $W$ in $A$, and note that $C\simeq (K/\cO)^{d^+}$. Setting $D:=A/C$, we find that $D\simeq (K/\cO)^{d^-}$. Following \cite[section 2]{greenberg2000iwasawa}, we recall the definition of the Greenberg Selmer group associated with the pair $(A, C)$. For $\ell\neq p$, set 
\[\cH_\ell(\Q_\infty, A):=\prod_{\eta|\ell} H^1(\Q_{\infty,\eta}, A), \] where $\eta$ runs through the primes of $\Q_\infty$ that lie above $\ell$. The set of primes $\eta$ of $\Q_\infty$ that lie above any given rational prime is finite. Denote by $I_{\eta_p}$ the inertia group of $\op{Gal}\left(\widebar{\Q_{\infty, \eta_p}}/\Q_{\infty, \eta_p}\right)$. We let $L_{\eta_p}$ be defined as follows
\[L_{\eta_p}:=\op{ker}\left(H^1(\Q_{\infty, \eta_p}, A)\xrightarrow{\kappa_p} H^1(I_{\eta_p}, D)\right),\]where $\kappa_p$ is the kernel of the composite of the natural maps
\[\begin{split} & H^1(\Q_{\infty, \eta_p}, A)\rightarrow H^1(I_{\eta_p}, A)\\
& H^1(I_{\eta_p}, A)\rightarrow H^1(I_{\eta_p}, D).\\ \end{split}\]
The first of the above maps is the restriction map and the second map is induced by the map $A\rightarrow D$. The local condition at $p$ is prescribed as follows $\cH_p(\Q_\infty, A):=H^1(\Q_\infty, A)/L_{\eta_p}$.
\par The Selmer group over $\Q_\infty$ is defined as follows
\[S_A(\Q_\infty):=\op{ker}\left(H^1(\Q_\Sigma/\Q_\infty, A)\rightarrow \bigoplus_{\ell\in \Sigma} \cH_\ell(\Q_\infty, A)\right)\]
where $\Sigma$ is a finite set of prime numbers containing $p$ and all prime numbers at which $\rho_M$ is ramified. As is well known, the Selmer group defined above is independent of the choice of primes $\Sigma$. Given a module $\mathbf{M}$ over $\Lambda$, set $\mathbf{M}^\vee:=\Hom_{\Z_p}\left(\mathbf{M}, \Q_p/\Z_p\right)$ to denote its Pontryagin dual. We say that $\mathbf{M}$ is cofinitely generated (resp. cotorsion) over $\Lambda$ if $\mathbf{M}^\vee$ is finitely generated (resp. torsion) as a $\Lambda$-module.

\par The Selmer group $S_A(\Q_\infty)$ is cofinitely generated as a module over $\Lambda$. Throughout we make the following assumption.
\begin{ass}[Cotorsion hypothesis]\label{cotorsion hyp}
    The Selmer group $S_A(\Q_\infty)$ is cofinitely generated over $\Lambda$.
\end{ass}
This assumption is known to hold in the main case of interest, namely for ordinary Galois representations associated with rational elliptic curves, cf. \cite[Theorem 1.5]{greenbergmain}.
\subsection{The Iwasawa $\mu$-invariant}
\par Let $\mathbf{M}$ be a cofinitely generated and cotorsion $\Lambda$-module. We recall the definition of the Iwasawa $\mu$-invariant associated to $\mathbf{M}$. 
\par A map of $\Lambda$-modules $\mathbf{M}_1\rightarrow \mathbf{M}_2$ is said to be a \emph{pseudo-isomorphism} if its kernel and cokernel are both finite. A polynomial $f(T)\in \Lambda$ is \emph{distinguished} if it is a monic polynomial and all non-leading coefficients are divisible by $\varpi$. According to the structure theorem of $\Lambda$-modules \cite[Chapter 13]{washington1997introduction}, there is a pseudoisomorphism of the form
\begin{equation}\label{pseudoiso}\mathbf{M}^\vee\longrightarrow \left(\bigoplus_{i=1}^s \Lambda/(\varpi^{\mu_i})\right)\oplus \left(\bigoplus_{j=1}^t \Lambda/\left(f_j(T)^{\lambda_j}\right)\right),\end{equation} where $\mu_i$ are positive integers and $f_j(T)$ are irreducible distinguished polynomials. The $\mu$ invariant of $\mathbf{M}$ is defined as follows
\[\mu(\mathbf{M}):=\sum_{i=1}^s \mu_i,\]where we set $\mu(\mathbf{M})=0$ if $s=0$. From the definition of the $\mu$-invariant, it is clear that $\mu(\mathbf{M})=0$ if and only if $M^\vee$ is finitely generated as a $\Z_p$-module.

\begin{proposition}\label{basic mu=0 criterion}
    Let $\mathbf{M}$ be a cofinitely generated and cotorsion $\Lambda$-module. Then, $\mu(\mathbf{M})=0$ if and only if $\mathbf{M}[\varpi]$ is finite.
\end{proposition}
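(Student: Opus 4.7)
The plan is to pass to the Pontryagin dual side and then apply the structure theorem. Pontryagin duality identifies multiplication by $\varpi$ on $\mathbf{M}$ with multiplication by $\varpi$ on $\mathbf{M}^\vee$, and therefore identifies the kernel $\mathbf{M}[\varpi]$ with the dual of the cokernel $\mathbf{M}^\vee/\varpi \mathbf{M}^\vee$. Since the Pontryagin dual of a discrete abelian group is finite if and only if the group itself is, it suffices to prove that $\mathbf{M}^\vee/\varpi \mathbf{M}^\vee$ is finite if and only if $\mu(\mathbf{M}) = 0$.

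Next I would carry out the computation on the elementary module
\[E \;:=\; \left(\bigoplus_{i=1}^s \Lambda/(\varpi^{\mu_i})\right)\oplus \left(\bigoplus_{j=1}^t \Lambda/\left(f_j(T)^{\lambda_j}\right)\right)\]
to which $\mathbf{M}^\vee$ is pseudo-isomorphic via \eqref{pseudoiso}. For each $i$ the reduction $(\Lambda/(\varpi^{\mu_i}))/\varpi \cong \F\llbracket T\rrbracket$ is infinite (recall $\mu_i\geq 1$), while for each $j$ one has $(\Lambda/(f_j^{\lambda_j}))/\varpi \cong \F\llbracket T\rrbracket/(\bar{f}_j^{\lambda_j}) \cong \F\llbracket T\rrbracket/(T^{\lambda_j \deg f_j})$, which is finite since $f_j$ is distinguished and hence reduces to a power of $T$ modulo $\varpi$. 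Consequently $E/\varpi E$ is finite precisely when $s = 0$, equivalently precisely when $\mu(\mathbf{M})=0$.

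Finally I would transfer the finiteness property from $E$ back to $\mathbf{M}^\vee$. Split the pseudo-isomorphism $\mathbf{M}^\vee \to E$ into two short exact sequences
\[0 \longrightarrow N \longrightarrow \mathbf{M}^\vee \longrightarrow E' \longrightarrow 0, \qquad 0 \longrightarrow E' \longrightarrow E \longrightarrow Q \longrightarrow 0,\]
with $N$ and $Q$ finite, and apply the snake lemma to multiplication by $\varpi$ on each. Because both the $\varpi$-torsion and the mod-$\varpi$ reduction of any finite $\Lambda$-module are finite, the resulting six-term exact sequences force $\mathbf{M}^\vee/\varpi \mathbf{M}^\vee$, $E'/\varpi E'$ and $E/\varpi E$ to be simultaneously finite, which closes the argument.

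The only real subtlety is that a pseudo-isomorphism is in general neither injective nor surjective, which is why the snake-lemma step must be applied to two short exact sequences rather than one; every other step is a formal consequence of Pontryagin duality and the structure theorem.
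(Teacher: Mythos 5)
Your proof is correct and follows essentially the same route as the paper's: dualize, reduce $\mathbf{M}[\varpi]$ to $\mathbf{M}^\vee/\varpi\mathbf{M}^\vee$, apply the structure theorem, and observe that $\Lambda/(\varpi^{\mu_i})$ contributes an infinite piece $\F\llbracket T\rrbracket$ modulo $\varpi$ while $\Lambda/(f_j^{\lambda_j})$ contributes the finite piece $\F\llbracket T\rrbracket/(T^{\lambda_j\deg f_j})$. The one place you go beyond the paper is in spelling out, via the snake lemma applied to the two short exact sequences obtained by factoring the pseudo-isomorphism through its image, why mod-$\varpi$ reduction preserves the pseudo-isomorphism property; the paper simply asserts this, so your version is slightly more complete on that step.
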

\begin{proof}
    The result is a direct consequence of the structure theorem for $\Lambda$-modules. We have a pseudo-isomorphism 
    \[\mathbf{M}^\vee\longrightarrow \left(\bigoplus_{i=1}^s \Lambda/(\varpi^{\mu_i})\right)\oplus \left(\bigoplus_{j=1}^t \Lambda/\left(f_j(T)^{\lambda_j}\right)\right),\]as described in \eqref{pseudoiso}. Let $\Omega$ denote the mod-$\varpi$ reduction of $\Lambda$. We identify $\left(\mathbf{M}[\varpi]\right)^\vee$ with $\mathbf{M}^\vee/\varpi \mathbf{M}^\vee$. The mod-$\varpi$ reduction of the above map is a pseudo-isomorphism 
    \[\left(\mathbf{M}[\varpi]\right)^\vee\longrightarrow \Omega^s\oplus \left(\bigoplus_{j=1}^t \Omega/(T^{d_j\lambda_j})\right),\] where $d_j=\op{deg}f_j(T)$. Clearly, $\Omega/(T^{d_j\lambda_j})$ is a finite dimensional $\F$-vector space, and $\Omega$ is infinite. Therefore, $M[\varpi]$ is finite if and only if $s=0$. We note that $s=0$ if and only if $\mu(\mathbf{M})=0$, this proves the result.
\end{proof}

\subsection{Selmer groups associated to modular forms and elliptic curves}
\par Let $\tau$ be a variable in the complex upper half plane and set $q:=\op{exp}(2\pi i \tau)$. We take $f=\sum_{n=1}^\infty a_n(f)q^n$ be a normalized Hecke eigencuspform of weight $k\geq 2$. We assume that with respect to the embedding $\iota_p$, the modular form $f$ is ordinary. This means that $\iota_p(a_p(f))$ is a unit of $\cO$. Let $K$ be the extension of $\Q_p$ generated by $\{\iota_p(a_n(f))\mid n\in \Z_{\geq 1}\}$. We note that $K$ is a finite extension of $\Q_p$. Let $\rho_{f, \iota_p}: \op{G}_{\Q}\rightarrow \op{GL}_2(K)$ be the Galois representation associated to $(f, \iota_p)$. Let $V=V_{f, \iota_p}$ be the underlying $2$-dimensional $K$-vector space on which $\op{G}_{\Q}$ acts by $K$-linear automorphisms. We choose a Galois stable $\cO$-lattice $M$ in $V$, and set $A:=V/M$. As a module over $\op{G}_p$, $M$ fits into a short exact sequence 
\[0\rightarrow M_0\rightarrow M\rightarrow M_1\rightarrow 0.\] The modules $M_0$ and $M_1$ are uniquely determined by the property that $M_0\simeq \cO(\alpha \chi^{k-1})$ and $M_1\simeq \cO(\alpha')$, where $\alpha$ and $\alpha'$ are unramified characters. We set $W:=(M_0)\otimes_{\cO}K$ and $C:=\op{image}\left(W\rightarrow A\right)$. Then the Greenberg Selmer group $S_A(\Q_\infty)$ associated to $(A, C)$ clearly satisfies the Assumption \ref{basic ass}. As is well known, in this context, the Greenberg Selmer group is pseudo-isomorphic to the Bloch-Kato Selmer group considered over $\Q_\infty$ (cf. \cite[Corollary 4.3]{ochiai2000control} for further details). It follows from results of Kato \cite{kato2004p} that this Selmer groups are cotorsion over $\Lambda$, i.e., Assumption \ref{cotorsion hyp} is satisfied. We note that the Selmer group $S_A(\Q_\infty)$ depends on the choice of embedding $\iota_p$ and the choice of Galois stable $\cO$-lattice $M$.  
\par Let us now consider Galois representations associated with elliptic curves over $\Q$. Let $E_{/\Q}$ be an elliptic curve with good ordinary reduction at $p$, and let $M:=T_p(E)$ be its $p$-adic Tate-module. The $p$-divisible Galois module $A$ is identified with $E[p^\infty]$, the $p$-power torsion points in $E(\bar{\Q})$. Since $E$ has ordinary reduction at $p$, there is a unique $\Z_p[\op{G}_p]$-submodule $C\simeq \Q_p/\Z_p(\alpha \chi)$ of $E[p^\infty]$, where $\alpha:\op{G}_p\rightarrow \Z_p^\times$ is an unramified character. The quotient $D:=A/C$ is unramified. The Greenberg Selmer group associated to $(A, C)$ is then denoted $\op{Sel}_{p^\infty}(\Q_\infty, E)$. Since $E$ arises from a Hecke eigencuspform of weight $2$, it follows from results of Kato \cite{kato2004p} that $\op{Sel}_{p^\infty}(\Q_\infty, E)^\vee$ is a torsion $\Lambda$-module, i.e., the Assumption \ref{cotorsion hyp} is satisfied. In this setting, there is no ambiguity in the definition of the Selmer group, since the field of coefficients equals $\Q_p$, and the $\Z_p$-Galois module is prescribed to be the $p$-adic Tate module of $E$. The Greenberg Selmer group coincides with the classical Selmer group, where the local conditions are defined via Kummer maps. We refer to \cite[section 2]{greenbergmain} for further details. Throughout, we shall set $\mu_p(E)$ to denote the $\mu$-invariant of the Selmer group $\op{Sel}_{p^\infty}(\Q_\infty, E)$.

\par If $E'$ is another elliptic curve over $\Q$ which is $\Q$-isogenous to $E'$, then, $T_p(E')\otimes_{\Z_p}\Q_p$ is isomorphic to $T_p(E)\otimes_{\Z_p}\Q_p$ as a $\Q_p[\op{G}_\Q]$-module, however, $T_p(E)$ is not isomorphic to $T_p(E')$. It is possible that $\mu_p(E')=0$, while $\mu_p(E)>0$, cf. \cite[section 7]{ray2021mu}.



\section{The residual Selmer group and the vanishing of the $\mu$-invariant}\label{s 3}

\subsection{The residual Selmer group}
Let $M$ be a module over $\op{G}_{\Q}$ for which Assumption \ref{basic ass} is satisfied. Associated with $S_A(\Q_\infty)$ is the residual Selmer group associated to the pair $(A, C)$. Stipulate that the cotorsion Assumption \ref{cotorsion hyp} is also satisfied. Set $A[\varpi^n]$ to denote the kernel of the multiplication by $\varpi^n$ endomorphism of $A$. We denote by $\bar{A}:=A[\varpi]$, and refer to the representation 
\[\rho_{\bar{A}}:\op{G}_\Q\rightarrow \op{Aut}_{\F}(\bar{A})\xrightarrow{\sim} \op{GL}_n(\F)\]as the \emph{residual representation}. This is because we may identify $\bar{A}:=A[\varpi]$ with $M/\varpi M$, and thus think of $\rho_{\bar{A}}$ as the mod-$\varpi$ reduction of $\rho_M$. We let $\bar{C}:=C[\varpi]$, and $\bar{D}:=\bar{A}/\bar{C}$; note that the vector spaces $\bar{A}, \bar{D}$ and $\bar{C}$ are $\F[\op{G}_p]$-modules and they fit into a short exact sequence
\[0\rightarrow \bar{C}\rightarrow \bar{A}\rightarrow \bar{D}\rightarrow 0.\]
We now introduce the \emph{residual Selmer group} associated to $(\bar{A}, \bar{C})$. For $\ell\neq p$, set \[\cH_\ell(\Q_\infty, \bar{A}):=\prod_{\eta|\ell} H^1(\Q_{\infty,\eta}, \bar{A}),\] where $\eta$ runs over all primes of $\Q_\infty$ that lie above $\ell$. At $p$, the local condition is defined by setting $\cH_p(\Q_\infty, \bar{A}):=H^1(\Q_{\infty, \eta_p}, \bar{A})/\bar{L}_{\eta_p}$, where
\[\bar{L}_{\eta_p}:=\op{ker}\left(H^1(\Q_{\infty, \eta_p}, \bar{A})\xrightarrow{\bar{\kappa}_p} H^1(I_{\eta_p}, \bar{D})\right);\] the map $\bar{\kappa}_p$ is the mod-$\varpi$ reduction of $\kappa_p$. 
\begin{definition}
    With respect to notation above, the residual Selmer group is defined as follows
    \[S_{\bar{A}}(\Q_\infty):=\op{ker}\left(H^1(\Q_\Sigma/\Q_\infty, \bar{A})\rightarrow \bigoplus_{\ell\in \Sigma} \cH_\ell(\Q_\infty, \bar{A})\right).\]
\end{definition}
We note in passing that this Selmer group depends not only on the residual representation, but also the choice $\bar{C}$. For an elliptic curve $E_{/\Q}$ with good ordinary reduction at $p$, the space $\bar{A}$ is identified with $E[p]$, and there is a unique one dimensional subspace $\bar{C}$ on which the inertia group at $p$ acts via the mod-$p$ cyclotomic character. Therefore, there is no ambiguity in the definition when it is specialized to an elliptic curve with good ordinary reduction at $p$. We now study the relationship between the residual Selmer group and the $\mu$-invariant of $S_A(\Q_\infty)$.
\begin{lemma}\label{g finite kernel and cokernel lemma}
    There is a natural map \[g:S_{\bar{A}}(\Q_\infty)\rightarrow S_A(\Q_\infty)[\varpi]\] with finite kernel and cokernel.
\end{lemma}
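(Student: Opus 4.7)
The plan is to derive $g$ from the Kummer-type short exact sequence $0 \to \bar A \to A \xrightarrow{\varpi} A \to 0$ and compare the two Selmer groups by a snake lemma argument. First I would verify that $g$ is well-defined: at primes $\ell \ne p$ the local conditions on both sides are zero, so nothing to check; at $p$, the quotient map $\bar D \to D$ induces a commutative square of local cohomologies which shows $\bar L_{\eta_p}$ maps into $L_{\eta_p}$, so that a class in $S_{\bar A}(\Q_\infty)$ lands in $S_A(\Q_\infty)[\varpi]$.

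Next I would set up the commutative diagram
\[
\begin{CD}
0 @>>> S_{\bar A}(\Q_\infty) @>>> H^1(\Q_\Sigma/\Q_\infty, \bar A) @>\phi_{\bar A}>> \bigoplus_{\ell\in \Sigma} \cH_\ell(\Q_\infty, \bar A) \\
@. @VVgV @VVhV @VV\alpha V \\
0 @>>> S_A(\Q_\infty)[\varpi] @>>> H^1(\Q_\Sigma/\Q_\infty, A)[\varpi] @>\phi_A>> \bigoplus_{\ell\in \Sigma} \cH_\ell(\Q_\infty, A)
\end{CD}
\]
and extract from the snake lemma the inclusions $\ker g \hookrightarrow \ker h$ and $\coker g \hookrightarrow \ker \alpha$. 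The proof then reduces to showing that $\ker h$ and each component $\ker \alpha_\ell$ is finite. For $\ker h$, the long exact sequence of the Kummer triangle identifies $\ker h = A(\Q_\infty)/\varpi A(\Q_\infty)$; since $A \cong (K/\cO)^d$ has finitely generated Pontryagin dual, so does its $G_\infty$-invariant subgroup, hence $A(\Q_\infty)$ decomposes as $(K/\cO)^a \oplus F$ with $F$ finite, and $\varpi$-divisibility of $K/\cO$ reduces the cokernel to a quotient of $F$.

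The same argument handles $\ker \alpha_\ell$ for $\ell \neq p$, once one observes that there are only finitely many primes of $\Q_\infty$ above $\ell$ (the decomposition group is an open subgroup of $\Gamma = \Z_p$, since the cyclotomic Frobenius has infinite order). For the delicate case $\ell = p$, I would exploit that $\bar\kappa_p$ and $\kappa_p$ are injective on the quotients $H^1/\bar L_{\eta_p}$ and $H^1/L_{\eta_p}$ respectively, so that $\alpha_p$ embeds into
\[
\beta : H^1(I_{\eta_p}, \bar D) \longrightarrow H^1(I_{\eta_p}, D).
\]
The kernel of $\beta$ is $D^{I_{\eta_p}}/\varpi D^{I_{\eta_p}}$; since $D$ is unramified one has $D^{I_{\eta_p}} = D$, and since $D \cong (K/\cO)^{d^-}$ is $\varpi$-divisible this quotient is zero. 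Hence $\alpha_p$ is actually injective, and the cokernel of $g$ is controlled entirely by the (finite) contributions away from $p$.

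The main obstacle is really just the bookkeeping at the prime $\eta_p$, where the local Selmer conditions are genuine subgroups (not zero) defined through the inertial map $\kappa_p$; once one recognizes that the injectivity of $\bar\kappa_p$ on the quotient lets one reduce to the map $\beta$ on inertia cohomology, the finiteness (in fact vanishing) follows immediately from $\varpi$-divisibility of $D$. Everything else is a formal snake lemma manipulation with standard finiteness inputs about cofinitely generated $\cO$-modules.
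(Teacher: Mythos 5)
Your proposal is correct and follows essentially the same route as the paper's proof: the Kummer sequence $0 \to \bar A \to A \xrightarrow{\varpi} A \to 0$, the snake lemma applied to the comparison diagram of the two defining sequences, finiteness of the global kernel from $H^0(\Q_\infty, A)/\varpi$, finiteness at $\ell \neq p$ from finite decomposition in $\Q_\infty$, and the key observation at $p$ that the map on inertial cohomology $H^1(I_{\eta_p}, D[\varpi]) \to H^1(I_{\eta_p}, D)$ is injective because $D$ is unramified and $\varpi$-divisible. The only cosmetic difference is that you observe $\alpha_p$ is actually injective rather than merely of finite kernel, which the paper also records implicitly via the injectivity of $f_p$.
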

\begin{proof}
   The result follows from \cite[Section 4.2]{emerton2006variation}, however, for the benefit of the reader we provide a sketch of this insightful argument. Recall that $\bar{A}=A[\varpi]$, consider the Kummer sequence of $\Z_p[\op{G}_\Q]$-modules
    \begin{equation}\label{kummer seq}0\rightarrow \bar{A}\rightarrow A\xrightarrow{\times \varpi} A\rightarrow 0.\end{equation}
    This induces and exact sequence 
    \begin{equation}\label{kummer eqn 1}
    0\rightarrow \left(\frac{H^0\left(\Q_\infty, A\right)}{\varpi H^0\left(\Q_\infty, A\right)}\right)\rightarrow H^1(\Q_\Sigma/\Q_\infty, \bar{A})\xrightarrow{f} H^1(\Q_\Sigma/\Q_\infty, A)[\varpi]\rightarrow 0.
    \end{equation}
 Let $\ell$ be a prime, and $\eta$ be a prime of $\Q_\infty$ that lies above $\ell$. From the Kummer sequence \eqref{kummer seq}, we obtain an exact sequence 
 \begin{equation}\label{kummer eqn 2}
    0\rightarrow \left(\frac{H^0\left(\Q_{\infty,\eta}, A\right)}{\varpi H^0\left(\Q_{\infty,\eta}, A\right)}\right)\rightarrow H^1(\Q_{\infty, \eta}, \bar{A})\xrightarrow{f_\eta} H^1(\Q_{\infty,\eta}, A)[\varpi]\rightarrow 0.
    \end{equation}
    For $\ell\neq p$, we let 
    \[f_\ell: \cH_\ell(\Q_\infty, \bar{A})\rightarrow  \cH_\ell(\Q_\infty, A)[\varpi]\]be the product of the maps $f_\eta$, where $\eta$ ranges over the primes above $\ell$. Since $\ell$ is finitely decomposed in $\Q_\infty$, it follows from \eqref{kummer eqn 2} that the kernel of $f_\ell$. Consider the commutative square
 \[\begin{tikzcd}
H^1(\Q_{\infty, \eta_p}, A[\varpi]) \arrow[r, "\bar{\kappa}_p"] \arrow[d]
& H^1(I_{\eta_p}, D[\varpi]) \arrow[d] \\
H^1(\Q_{\infty, \eta_p}, A) \arrow[r, "\kappa_p"]
& H^1(I_{\eta_p}, D).
\end{tikzcd}\]
We identify $\cH_p(\Q_\infty, A)$ (resp. $\cH_p(\Q_\infty, \bar{A})$) with the image of $\kappa_p$ (resp. $\bar{\kappa}_p$). Form the commutativity of the above square, we obtain a map
\[f_p: \cH_p(\Q_\infty, \bar{A})\rightarrow \cH_p(\Q_\infty, A)[\varpi].\] From the exact sequence of $I_{\eta_p}$-modules
\[0\rightarrow D[\varpi]\rightarrow D\xrightarrow{\times \varpi} D\rightarrow 0, \] we obtain an exact sequence
\[0\rightarrow \frac{H^0(I_{\eta_p}, D)}{\varpi H^0(I_{\eta_p}, D)}\rightarrow H^1(I_{\eta_p}, D[\varpi])\rightarrow H^1(I_{\eta_p}, D)\rightarrow 0.\]
We note that $D$ is divisible and unramified at $\eta_p$, hence, $H^0(I_{\eta_p}, D)=D$. Since $D$ is divisible, it follows that the map
\[H^1(I_{\eta_p}, D[\varpi])\rightarrow H^1(I_{\eta_p}, D)\] is injective, and hence $f_p$ is injective. The map $f$ restricts to a map 
\[g:S_{\bar{A}}(\Q_\infty)\rightarrow S_A(\Q_\infty)[\varpi]\] which fits into a commutative diagram
 \[\begin{tikzcd}
0 \arrow[r] & S_{\bar{A}}(\Q_\infty) \arrow[r] \arrow[d, "g"]
& H^1(\Q_\Sigma/\Q_\infty, \bar{A}) \arrow[r, "\Phi_{\bar{A}}"]\arrow[d, "f"] & \bigoplus_{\ell\in \Sigma} \cH_\ell(\Q_\infty, \bar{A}) \arrow[d, "h=\oplus f_\ell"]\\
0 \arrow[r] & S_A(\Q_\infty)[\varpi] \arrow[r]
& H^1(\Q_\Sigma/\Q_\infty, A)[\varpi] \arrow[r, "\Phi_{A}"] & \bigoplus_{\ell\in \Sigma} \cH_\ell(\Q_\infty, A)[\varpi].
\end{tikzcd}\]
Let $h'$ be the restriction of $h$ to the image of $\Phi_V$. From the snake lemma, we obtain an exact sequence
\[0\rightarrow \ker g\rightarrow \ker f \rightarrow \ker h'\rightarrow \coker g \rightarrow \coker f=0.\]
From \eqref{kummer eqn 1}, we know that the kernel of $f$ is finite, and hence, the kernel of $g$ is finite. We have shown that the kernel of $h$ is finite, and hence, $\ker h'$ is finite. Therefore, we find that both the kernel and cokernel of $g$ are finite, and this completes the proof.
\end{proof}

\begin{proposition}\label{prop 3.3}
Suppose that the Assumptions \ref{basic ass} and \ref{cotorsion hyp} are satisfied. Then, the following conditions are equivalent.
\begin{enumerate}
    \item The $\mu$-invariant of $S_A(\Q_\infty)$ is equal to $0$.
    \item The residual Selmer group $S_{\bar{A}}(\Q_\infty)$ is finite.
\end{enumerate}
\end{proposition}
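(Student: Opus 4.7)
The plan is to combine the two ingredients immediately at hand: Proposition \ref{basic mu=0 criterion}, which characterizes vanishing of the $\mu$-invariant of a cofinitely generated cotorsion $\Lambda$-module $\mathbf{M}$ by the finiteness of $\mathbf{M}[\varpi]$, and Lemma \ref{g finite kernel and cokernel lemma}, which supplies a natural map $g : S_{\bar{A}}(\Q_\infty) \to S_A(\Q_\infty)[\varpi]$ with finite kernel and finite cokernel.

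First I would apply Proposition \ref{basic mu=0 criterion} to the module $\mathbf{M} = S_A(\Q_\infty)$, which is cofinitely generated and cotorsion over $\Lambda$ by Assumption \ref{cotorsion hyp}. This reduces the equivalence to showing that $S_A(\Q_\infty)[\varpi]$ is finite if and only if $S_{\bar{A}}(\Q_\infty)$ is finite. Next I would invoke Lemma \ref{g finite kernel and cokernel lemma}: since $g$ has finite kernel and finite cokernel, $|S_{\bar{A}}(\Q_\infty)|$ and $|S_A(\Q_\infty)[\varpi]|$ are either both finite or both infinite. Chaining the two equivalences gives the proposition.

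There is essentially no obstacle, since the nontrivial content has already been absorbed into Proposition \ref{basic mu=0 criterion} (a direct consequence of the structure theorem for $\Lambda$-modules) and Lemma \ref{g finite kernel and cokernel lemma} (a Kummer-sequence and snake-lemma diagram chase, using that the local quotient $D$ is divisible and unramified at $\eta_p$, and that primes $\ell \neq p$ are finitely decomposed in $\Q_\infty$). The only care needed is the verification that the cotorsion hypothesis is genuinely used when applying Proposition \ref{basic mu=0 criterion}; without it, finiteness of $\mathbf{M}[\varpi]$ does not by itself force $\mu(\mathbf{M}) = 0$. Thus the write-up will consist of two lines: reduce via Proposition \ref{basic mu=0 criterion}, then transfer finiteness across $g$ via Lemma \ref{g finite kernel and cokernel lemma}.
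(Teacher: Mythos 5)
Your proposal matches the paper's proof exactly: both reduce via Proposition \ref{basic mu=0 criterion} to the finiteness of $S_A(\Q_\infty)[\varpi]$, then transfer that finiteness to $S_{\bar{A}}(\Q_\infty)$ using the finite kernel and cokernel of the map $g$ from Lemma \ref{g finite kernel and cokernel lemma}. No gaps, and no meaningful difference in approach.
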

\begin{proof}
   It follows from Proposition \ref{basic mu=0 criterion} that the $\mu$-invariant of $S_A(\Q_\infty)$ is $0$ if and only if $S_A(\Q_\infty)[\varpi]$ is finite. Then it follows from Lemma \ref{g finite kernel and cokernel lemma} that $S_A(\Q_\infty)[\varpi]$ is finite if and only if $S_{\bar{A}}(\Q_\infty)$ is finite. This completes the proof.
\end{proof}

\section{The fine Selmer group}\label{s 4}
In this section, we recall the definition of the fine Selmer group associated to $A$. For further details, we refer to \cite{coates2005fine,DRS2023}. We do \emph{not} insist that $A$ satisfies the Assumption \ref{basic ass} in this section. Recall that $\Sigma$ is a finite set of prime numbers containing $p$ and the primes that are ramified in $A$. Let $F$ be a number field and $F_\infty$ be the composite of $F$ with $\Q_\infty$. For any prime $\ell$, set $\cK_\ell(F_\infty, A):=\prod_{\eta|\ell} H^1(F_{\infty,\eta}, A)$, where $\eta$ runs over the primes of $F_\infty$ that lie above $\ell$. We note that for any prime $\ell\neq p$, the local condition $\cK_\ell(\Q_\infty, A)$ coincides with $\cH_\ell(\Q_\infty, A)$, the difference lies at the prime $p$. The \emph{fine Selmer group} is defined as follows
\[S_A^0(F_\infty):=\op{ker}\left(H^1(F_\Sigma/F_\infty, A)\rightarrow \bigoplus_{\ell\in \Sigma} \cK_\ell(F_\infty, A)\right).\] Here, $F_\Sigma$ is the maximal extension of $F$ in which all primes $\ell\notin \Sigma$ are unramified. As is well known, the definition above is independent of the choice of $\Sigma$. For further details, we refer to \cite[Lemma 3.2]{SujathaWitte}. Given an elliptic curve $E$ over a number field $F$, and a prime number $p$, set $\op{Sel}_{p^\infty}^0(F_\infty, E)$ be the fine Selmer group associated to $A=E[p^\infty]$ over $F_{\infty}$.

\par Define the \emph{residual fine Selmer group} by setting 
\[\cK_\ell(F_\infty, \bar{A}):=\prod_{\eta|\ell} H^1(F_{\infty,\eta}, \bar{A})\] for all prime numbers $\ell\in \Sigma$, and setting 
\[S_{\bar{A}}^0(F_\infty):=\op{ker}\left(H^1(F_\Sigma/F_\infty, \bar{A})\rightarrow \bigoplus_{\ell\in \Sigma} \cK_\ell(F_\infty, \bar{A})\right).\]
The fine Selmer group fits into a left exact sequence
\[0\rightarrow S_A^0(\Q_\infty)\rightarrow S_A(\Q_\infty)\rightarrow L_{\eta_p}. \]
\begin{lemma}\label{g' finite kernel and cokernel lemma}
    There is a natural map \[g_0:S_{\bar{A}}^0(\Q_\infty)\rightarrow S_A^0(\Q_\infty)[\varpi]\] with finite kernel and cokernel.
\end{lemma}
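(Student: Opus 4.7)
\medskip

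\noindent\textbf{Proof proposal.} The plan is to mimic the argument of Lemma \ref{g finite kernel and cokernel lemma}, exploiting the fact that now the local conditions on both sides are unmodified local cohomology groups, which should make matters more uniform than in the Greenberg case. First I would apply the Kummer sequence
\[0\to \bar{A}\to A\xrightarrow{\times \varpi} A\to 0\]
of $\Z_p[\op{G}_\Q]$-modules over $\Q_\Sigma/\Q_\infty$ to obtain the analogue of \eqref{kummer eqn 1}, namely a global map
\[f:H^1(\Q_\Sigma/\Q_\infty, \bar{A})\to H^1(\Q_\Sigma/\Q_\infty, A)[\varpi]\]
whose kernel is $H^0(\Q_\infty, A)/\varpi H^0(\Q_\infty, A)$; this kernel is finite because $H^0(\Q_\infty, A)$ is an $\cO$-submodule of the cofinitely generated $\cO$-module $A\simeq (K/\cO)^d$, so its mod-$\varpi$ quotient is finite.

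Next, for each $\ell\in \Sigma$ and each prime $\eta$ of $\Q_\infty$ above $\ell$, I would apply the Kummer sequence locally to produce maps
\[f_\eta: H^1(\Q_{\infty,\eta},\bar{A})\to H^1(\Q_{\infty,\eta}, A)[\varpi]\]
whose kernels $H^0(\Q_{\infty,\eta},A)/\varpi H^0(\Q_{\infty,\eta},A)$ are again finite by the same $\cO$-cofinite-generation argument. Taking the product over the (finitely many) primes $\eta\mid \ell$ and then over $\ell\in \Sigma$ gives a map
\[h:\bigoplus_{\ell\in \Sigma}\cK_\ell(\Q_\infty,\bar{A})\to \bigoplus_{\ell\in \Sigma} \cK_\ell(\Q_\infty,A)[\varpi]\]
with finite kernel. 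Crucially, in contrast to the previous lemma, no quotient by $\bar{L}_{\eta_p}$ or analysis of $\kappa_p$ enters here, because the fine local condition at $p$ is the whole cohomology group.

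Then I would assemble the commutative diagram
\[\begin{tikzcd}
0 \arrow[r] & S_{\bar{A}}^0(\Q_\infty) \arrow[r] \arrow[d, "g_0"]
& H^1(\Q_\Sigma/\Q_\infty, \bar{A}) \arrow[r]\arrow[d, "f"] & \bigoplus_{\ell\in \Sigma} \cK_\ell(\Q_\infty, \bar{A}) \arrow[d, "h"]\\
0 \arrow[r] & S_A^0(\Q_\infty)[\varpi] \arrow[r]
& H^1(\Q_\Sigma/\Q_\infty, A)[\varpi] \arrow[r] & \bigoplus_{\ell\in \Sigma} \cK_\ell(\Q_\infty, A)[\varpi],
\end{tikzcd}\]
where the vertical maps are induced by the inclusion $\bar{A}\hookrightarrow A$. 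A snake lemma argument identical in shape to the one used for $g$ yields an exact sequence
\[0\to \ker g_0\to \ker f\to \ker h'\to \coker g_0\to 0,\]
where $h'$ is the restriction of $h$ to the image of the global restriction map. Since both $\ker f$ and $\ker h'\subseteq \ker h$ are finite by the steps above, we conclude that both $\ker g_0$ and $\coker g_0$ are finite.

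The argument is essentially routine given the template of Lemma \ref{g finite kernel and cokernel lemma}; the only delicate point is verifying the finiteness of $H^0(\Q_{\infty,\eta},A)/\varpi$ at each $\eta\in \Sigma$, which is the main input and follows from $A$ being $\cO$-cofinitely generated, so I do not anticipate a serious obstacle.
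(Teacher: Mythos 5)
Your proposal is correct and follows essentially the same route as the paper: the paper also constructs $g_0$ by restricting the global Kummer map $f$, assembles the analogous commutative diagram with the $\cK_\ell$ local conditions, observes the vertical kernels are finite, and invokes the snake lemma argument from the earlier lemma. Your write-up is somewhat more explicit than the paper's (which simply says "by the same argument as in the proof of Lemma \ref{g finite kernel and cokernel lemma}"), and you correctly flag the simplification at $p$ — no quotient by $\bar{L}_{\eta_p}$ nor any analysis of $\kappa_p$ is needed since the fine local condition is the full local cohomology group.
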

\begin{proof}
    The proof is similar to that of Lemma \ref{g finite kernel and cokernel lemma}, we provide a sketch of the details. The map $g_0$ is induced by restricting $f$ to $S_{\bar{A}}^0(\Q_\infty)$. It is easy to see that the image of this restriction lies in $S_A^0(\Q_\infty)[\varpi]$. The map $g_0$ fits into a natural commutative diagram depicted below
     \[\begin{tikzcd}
0 \arrow[r] & S_{\bar{A}}^0(\Q_\infty) \arrow[r] \arrow[d, "g_0"]
& H^1(\Q_\Sigma/\Q_\infty, \bar{A}) \arrow[r, "\Phi_{\bar{A}}' "]\arrow[d, "f"] & \bigoplus_{\ell\in \Sigma} \mathcal{K}_\ell(\Q_\infty, \bar{A}) \arrow[d]\\
0 \arrow[r] & S_A^0(\Q_\infty)[\varpi] \arrow[r]
& H^1(\Q_\Sigma/\Q_\infty, A)[\varpi] \arrow[r, "\Phi_{A}' "] & \bigoplus_{\ell\in \Sigma} \mathcal{K}_\ell(\Q_\infty, A)[\varpi].
\end{tikzcd}\]
In the above diagram, the horizontal maps $\Phi_{\bar{A}}'$ and $\Phi_A'$ are induced by restriction maps. The kernels of both vertical maps in the diagram are finite. By the same argument as in the proof of Lemma \ref{g finite kernel and cokernel lemma}, it follows that the kernel and cokernel of $g_0$ are finite.
\end{proof}

\begin{proposition}\label{prop 4.2}
With respect to notation above, the following conditions are equivalent.
\begin{enumerate}
    \item The $\mu$-invariant of $S_A^0(\Q_\infty)$ is equal to $0$.
    \item The residual fine Selmer group $S_{\bar{A}}^0(\Q_\infty)$ is finite.
\end{enumerate}
\end{proposition}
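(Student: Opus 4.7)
The plan is to mirror the proof strategy of Proposition \ref{prop 3.3} in the fine Selmer setting, since all of the required inputs have already been developed. The three moving parts are: (i) the general criterion of Proposition \ref{basic mu=0 criterion} relating vanishing of the $\mu$-invariant to finiteness of the $\varpi$-torsion; (ii) Lemma \ref{g' finite kernel and cokernel lemma} relating $S_{\bar A}^0(\Q_\infty)$ to $S_A^0(\Q_\infty)[\varpi]$; and (iii) a preliminary verification that $S_A^0(\Q_\infty)$ is cofinitely generated and cotorsion over $\Lambda$, so that (i) actually applies to it.

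First I would address (iii). The left-exact sequence
\[0\rightarrow S_A^0(\Q_\infty)\rightarrow S_A(\Q_\infty)\rightarrow L_{\eta_p}\]
displayed earlier in this section realizes $S_A^0(\Q_\infty)$ as a $\Lambda$-submodule of $S_A(\Q_\infty)$. Taking Pontryagin duals, $S_A^0(\Q_\infty)^\vee$ is a quotient of $S_A(\Q_\infty)^\vee$, and hence inherits from Assumption \ref{cotorsion hyp} the property of being a finitely generated torsion $\Lambda$-module. Thus Proposition \ref{basic mu=0 criterion} applies to $\mathbf{M}=S_A^0(\Q_\infty)$, yielding the equivalence
\[\mu\bigl(S_A^0(\Q_\infty)\bigr)=0 \iff S_A^0(\Q_\infty)[\varpi] \text{ is finite.}\]

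Next I would invoke Lemma \ref{g' finite kernel and cokernel lemma}, which furnishes a homomorphism $g_0\colon S_{\bar A}^0(\Q_\infty)\to S_A^0(\Q_\infty)[\varpi]$ with finite kernel and cokernel. A module is finite if and only if any module differing from it by finite kernel and cokernel is finite, so
\[S_{\bar A}^0(\Q_\infty) \text{ is finite} \iff S_A^0(\Q_\infty)[\varpi] \text{ is finite.}\]
Concatenating the two equivalences yields the proposition.

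I do not anticipate any real obstacle here; this is a formal consequence of results already established, and the argument runs in complete parallel to Proposition \ref{prop 3.3}. The only point that deserves a brief remark (rather than being treated as an obstacle) is the cotorsion verification for $S_A^0(\Q_\infty)$, since Assumption \ref{cotorsion hyp} was phrased only for the Greenberg Selmer group; the short exact sequence above makes this immediate.
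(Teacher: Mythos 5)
Your proof is correct and follows essentially the same route as the paper: reduce via Proposition \ref{basic mu=0 criterion} to the finiteness of $S_A^0(\Q_\infty)[\varpi]$, then transfer to $S_{\bar A}^0(\Q_\infty)$ via Lemma \ref{g' finite kernel and cokernel lemma}. Your preliminary verification that $S_A^0(\Q_\infty)$ is cofinitely generated and cotorsion (dualizing the inclusion $S_A^0(\Q_\infty)\hookrightarrow S_A(\Q_\infty)$ to realize $S_A^0(\Q_\infty)^\vee$ as a quotient of the finitely generated torsion module $S_A(\Q_\infty)^\vee$) is a point the paper silently assumes, and including it makes the argument cleaner.
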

\begin{proof}
   It follows from Proposition \ref{basic mu=0 criterion} that the $\mu$-invariant of $S_A^0(\Q_\infty)$ is $0$ if and only if $S_A^0(\Q_\infty)[\varpi]$ is finite. Then it follows from Lemma \ref{g' finite kernel and cokernel lemma} that $S_A^0(\Q_\infty)[\varpi]$ is finite if and only if $S_{\bar{A}}^0(\Q_\infty)$ is finite. This completes the proof.
\end{proof}

At this point, it is pertinent to recall a conjecture of Coates and Sujatha on the structure of the fine Selmer group associated with an elliptic curve. For futher details, see \cite[Conjecture A]{coates2005fine}.

\begin{conjecture}[Coates-Sujatha]\label{CS conj}
    Let $E$ be an elliptic curve over a number field $F$ and $p$ be a prime above which $E$ has good reduction. Then, the fine Selmer group $\op{Sel}_{p^\infty}^0(F_\infty, E)$ is cofinitely generated as a $\Z_p$-module.
\end{conjecture}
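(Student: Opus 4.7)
Since the final statement is Coates--Sujatha's Conjecture A, which remains a major open problem in full generality, the plan is to reduce it to the classical Iwasawa $\mu=0$ conjecture applied to a suitable trivializing number field. The key observation is that the fine Selmer condition forces the restriction to \emph{all} local cohomologies (including at primes above $p$) to vanish, so elements of $\op{Sel}_{p^\infty}^0(F_\infty,E)$ correspond, via Galois theory, to extensions in which every prime in $\Sigma$ splits completely. In particular the fine Selmer group is ``unramified'' in nature and resembles a classical class group rather than the Greenberg Selmer group.

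First, I would pass from $F_\infty$ to the trivializing extension $L_\infty := F(E[p])\cdot F_\infty$ via inflation--restriction. Since $\op{Gal}(L_\infty/F_\infty)$ is finite, the natural restriction map
\[
\op{Sel}_{p^\infty}^0(F_\infty, E) \longrightarrow \op{Sel}_{p^\infty}^0(L_\infty, E)^{\op{Gal}(L_\infty/F_\infty)}
\]
has finite kernel and cokernel, reducing the problem to $L_\infty$. Over $L_\infty$, since $E[p]$ carries trivial Galois action and $E[p^\infty]$ is built up from $E[p]$ by $\varpi$-power extensions, standard d\'evissage along the Kummer sequence identifies (up to finite error) the fine Selmer group with $\Hom(Y_{L_\infty}, E[p^\infty])^{\op{Gal}(L_\infty/F_\infty)}$, where $Y_{L_\infty}$ denotes the Galois group of the maximal abelian pro-$p$ extension of $L_\infty$ unramified outside $\Sigma$ in which every prime above $\Sigma$ splits completely. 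Because the split-completely condition is stronger than unramifiedness, $Y_{L_\infty}$ is a quotient of the classical Iwasawa module $X_{L_\infty} := \op{Gal}(M_\infty/L_\infty)$, with $M_\infty$ the maximal unramified abelian pro-$p$ extension of $L_\infty$.

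Second, because $E[p^\infty]\cong (\Q_p/\Z_p)^2$ as an abelian group, the structure theory for $\Lambda$-modules gives
\[
\mu\bigl(\op{Sel}_{p^\infty}^0(F_\infty,E)^\vee\bigr)\;\leq\; 2\,\mu(Y_{L_\infty})\;\leq\; 2\,\mu_p(L),
\]
so cofinite generation as a $\Z_p$-module follows from the classical $\mu=0$ statement $\mu_p(L)=0$. The main obstacle is precisely this latter vanishing: when $L/\Q$ is abelian — for instance, $F=\Q$ together with $\bar\rho_{E,p}$ sufficiently split — the theorem of Ferrero--Washington \cite{ferrero1979iwasawa} supplies $\mu_p(L)=0$ and the conjecture follows unconditionally; when $L/\Q$ is non-abelian (as occurs the moment $\bar\rho_{E,p}$ has non-solvable image), this becomes Iwasawa's classical $\mu=0$ conjecture, which is widely believed but remains open. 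Thus the best one can plausibly hope for today is a conditional proof, with Iwasawa's conjecture for $L$ as the unavoidable deep input.
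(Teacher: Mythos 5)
This statement is a conjecture (Coates--Sujatha's Conjecture A), not a theorem: the paper offers no proof, simply recording it as open and citing the equivalence with the classical $\mu_p = 0$ statement (Theorem \ref{CS main result}, quoted from \cite{coates2005fine}). Your reduction --- pass to the trivializing extension $L_\infty = F(E[p]) \cdot F_\infty$, observe that the fine Selmer group is controlled by a quotient of the unramified Iwasawa module, and invoke Ferrero--Washington when $L/\Q$ is abelian or Iwasawa's open $\mu = 0$ conjecture in general --- is precisely the perspective the paper deploys downstream in Theorem \ref{CS thm} and in the proof of Theorem \ref{main thm 2}, so your understanding is aligned with the paper's.
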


Under some additional conditions, the above conjecture is known to hold. Let $F(E_{p^\infty})$ be the Galois extension of $F$ generated by $E_{p^\infty}$. In greater detail, letting $\rho_{E,p}: \op{G}_F\rightarrow \op{GL}_2(\Z_p)$ be the Galois representation on the $p$-adic Tate module of $E$, the extension $F(E_{p^\infty}):=\bar{F}^{\ker \rho_{E,p}}$. Note that $\rho_{E,p}$ induces an inclusion of $\op{Gal}(F(E_{p^\infty})/F)$ into $\op{GL}_2(\Z_p)$.

\begin{theorem}[Coates-Sujatha]\label{CS main result}
Let $E_{/F}$ be an elliptic curve and $p$ an odd prime such that $F(E_{p^\infty})$ is a pro-$p$ extension of $F$. Then, the following conditions are equivalent
\begin{enumerate}
    \item Conjecture \ref{CS conj} is valid, i.e., $\op{Sel}_{p^\infty}^0(F_\infty, E)$ is cofinitely generated as a $\Z_p$-module. 
    \item The classical Iwasawa $\mu$-invariant $\mu_p(F)$ vanishes.
\end{enumerate}
\end{theorem}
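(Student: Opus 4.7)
The plan is to combine Proposition \ref{prop 4.2} (applied over $F$ rather than $\Q$) with a class-field-theoretic interpretation of the residual fine Selmer group, and then appeal to Iwasawa's stability of the $\mu$-invariant under finite $p$-extensions.

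First I would invoke the $F$-analogue of Proposition \ref{prop 4.2} to rephrase condition~(1) as: the residual fine Selmer group $S_{E[p]}^0(F_\infty)$ is finite. Setting $F' := F(E[p])$ and $F'_\infty := F' \cdot F_\infty$, the pro-$p$ hypothesis on $F(E_{p^\infty})/F$ forces $F'/F$ to be a finite $p$-extension, so $H := \op{Gal}(F'_\infty/F_\infty)$ is a finite $p$-group. An inflation-restriction argument applied to the tower $F_\Sigma / F'_\infty / F_\infty$ with coefficients in the finite module $E[p]$, carried out both globally and at each local condition, shows that the restriction map $S_{E[p]}^0(F_\infty) \to S_{E[p]}^0(F'_\infty)^H$ has finite kernel and cokernel. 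Hence finiteness of $S_{E[p]}^0(F_\infty)$ is equivalent to finiteness of $S_{E[p]}^0(F'_\infty)$.

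Next I would translate finiteness of $S_{E[p]}^0(F'_\infty)$ into a statement about classical Iwasawa class-group modules. Since $\op{G}_{F'_\infty}$ acts trivially on $E[p]$, we have $H^1(F_\Sigma/F'_\infty, E[p]) = \Hom(\op{Gal}(F_\Sigma/F'_\infty), E[p])$, and the fine Selmer conditions (triviality at every prime above $\Sigma$) identify
\[ S_{E[p]}^0(F'_\infty) \cong \Hom\bigl(Y_\Sigma(F'_\infty), E[p]\bigr), \]
where $Y_\Sigma(F'_\infty)$ is the Galois group of the maximal abelian pro-$p$ extension of $F'_\infty$ that is unramified outside $\Sigma$ and totally split at every place above $\Sigma$. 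Because only finitely many primes of $F'_\infty$ lie above $\Sigma$ and each contributes at most a cyclic pro-$p$ quotient (generated by Frobenius), $Y_\Sigma(F'_\infty)$ differs from the classical unramified Iwasawa module $X(F'_\infty)$ by a finitely generated $\Z_p$-module, so they share the same $\mu$-invariant. Thus $S_{E[p]}^0(F'_\infty)$ is finite if and only if $\mu_p(F') = 0$. Finally, since $F'/F$ is a finite $p$-extension, Iwasawa's theorem on the behaviour of the $\mu$-invariant under $p$-extensions yields $\mu_p(F') = 0 \iff \mu_p(F) = 0$, completing the chain of equivalences (1)$\iff$(2).

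The main obstacle will be the class-field-theoretic step: one must verify carefully that the fine-Selmer local conditions at primes above $p$ truly correspond to the totally-split condition defining $Y_\Sigma(F'_\infty)$ (as opposed to a weaker local triviality), and that passing from $X(F'_\infty)$ to $Y_\Sigma(F'_\infty)$ does not disturb the $\mu$-invariant. Once this identification is pinned down, the remaining inflation-restriction bookkeeping and Iwasawa's stability theorem are standard.
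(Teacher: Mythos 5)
The paper itself does not prove this theorem; it simply cites it as \cite[Theorem 3.4]{coates2005fine}. Your proposal is a serious attempt to reprove the result from scratch, and its overall architecture—reduce to the trivialising field $F' = F(E[p])$, identify the residual fine Selmer group over $F'_\infty$ with $\Hom$ out of a $\Sigma$-split Iwasawa module, match $\mu$-invariants, and descend using Iwasawa's stability of $\mu$ under finite $p$-extensions—is close in spirit to the Coates--Sujatha argument. The class-field-theoretic identification you flag as the main obstacle is in fact fine: the fine Selmer condition kills the full decomposition group (not merely inertia) at every prime above $\Sigma$, which is exactly the totally-split condition, and the kernel of $X(F'_\infty) \twoheadrightarrow Y_\Sigma(F'_\infty)$ is generated by Frobenius elements at the finitely many $\Sigma$-primes of $F'_\infty$, hence finitely generated over $\Z_p$, so $\mu$ is unaffected.

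The genuine gap is elsewhere, in the descent step. The inflation--restriction argument gives a map
\[
S_{E[p]}^0(F_\infty) \longrightarrow S_{E[p]}^0(F'_\infty)^{H}
\]
with finite kernel and cokernel, and from this you conclude that finiteness of $S_{E[p]}^0(F_\infty)$ is \emph{equivalent} to finiteness of $S_{E[p]}^0(F'_\infty)$. One direction is fine: if $S_{E[p]}^0(F'_\infty)$ is finite, so are its $H$-invariants, hence so is $S_{E[p]}^0(F_\infty)$. But the implication you actually need for $(1)\Rightarrow(2)$ is the converse: finiteness of $S_{E[p]}^0(F'_\infty)^{H}$ forcing finiteness of the whole of $S_{E[p]}^0(F'_\infty)$. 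This does not follow for abstract reasons. Dually, if $X := S_{E[p]}^0(F'_\infty)^\vee$ is a finitely generated $\F_p\llbracket \Gamma'\rrbracket[H]$-module, finiteness of $X_H$ does not imply finiteness of $X$: take $H=\langle\sigma\rangle$ cyclic of order $p$, $\Omega'=\F_p\llbracket T\rrbracket$, and $X = \Omega'[H]/(\sigma-1-T) \cong \Omega'$; then $X_H = X/TX = \F_p$ is finite while $X$ is infinite. So the ``Hence'' is unjustified, and this is precisely the direction that controls whether a positive $\mu_p(F')$ can be detected at the level of $F_\infty$. Repairing it requires a genuine control-theorem or Nakayama argument over the full Iwasawa algebra $\Z_p\llbracket\op{Gal}(F'_\infty/F)\rrbracket$ (or, as Coates and Sujatha do, a different organisation of the descent), not just the passage to $H$-invariants. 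A secondary, minor point: you invoke ``the $F$-analogue of Proposition \ref{prop 4.2},'' which is only stated over $\Q$ in the paper; the generalisation is routine, but it should be stated.
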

\begin{proof}
    The above result is \cite[Theorem 3.4]{coates2005fine}.
\end{proof}

\section{Structure of the residual Greenberg Selmer group}\label{s 5}
In this section, we prove some of the main results of the article which will be of key importance in the proof of Theorem \ref{main thm}. At the end of this section, we shall prove Theorem \ref{main thm 2}. We begin by proving an explicit relationship between the residual Selmer group and the residual fine Selmer group. These residual Selmer groups were introduced in the previous section.

It is necessary to introduce an assumption on the Galois action on the residual representation $\bar{A}$. 
\begin{ass}\label{main assumption}
    Assume that $\bar{C}$ does not contain a non-zero $\op{G}_{\Q_\infty}$-submodule of $\bar{A}$. 
\end{ass}
For Galois representations associated with elliptic curves, we characterize precisely when the above Assumption holds.
\begin{proposition}\label{assumption ell curve}
    Let $E_{/\Q}$ be an elliptic curve with good ordinary reduction at $p$, and let $\bar{\rho}_{E, p}:\op{G}_\Q\rightarrow \op{GL}_2(\F_p)$ be the Galois representation on the torsion subgroup $E[p]\subset E(\bar{\Q})$. Thus, $A=E[p^\infty]$ and $\bar{A}=E[p]$. Let $(e_1, e_2)$ be an ordered basis of $E[p]$ such that $\bar{C}=\F_p \cdot e_1$. The following assertions hold.
    
    \begin{enumerate}
        \item\label{p1 assumption ell curve} Assume that $\bar{\rho}_{E, p}$ is irreducible. Then, the Assumption \ref{main assumption} holds. 
        \item\label{p2 assumption ell curve} Assume that $\bar{\rho}_{E, p}$ is reducible and indecomposable\footnote{i.e., it is reducible but does not split into a sum of characters with respect to any basis}, and with respect to the basis $(e_1, e_2)$, takes the form
        \[\bar{\rho}_{E, p}=\mtx{\varphi_1}{0}{\ast}{\varphi_2}.\]
        Then, the Assumption \ref{main assumption} holds.
        \item\label{p3 assumption ell curve} Assume that $\bar{\rho}_{E, p}$ is reducible and with respect to the basis $(e_1, e_2)$, takes the form
        \[\bar{\rho}_{E, p}=\mtx{\varphi_1}{\ast}{0}{\varphi_2}.\] Then, the Assumption \ref{main assumption} does not hold.
    \end{enumerate} 
\end{proposition}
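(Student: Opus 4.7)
The plan is as follows. Since $\bar{C}=\F_p\cdot e_1$ is one-dimensional, Assumption~\ref{main assumption} is equivalent to the statement that $\bar{C}$ itself is not stabilized (setwise) by $\op{G}_{\Q_\infty}$; indeed the only candidate non-zero $\op{G}_{\Q_\infty}$-submodule of $\bar{A}$ contained in $\bar{C}$ is $\bar{C}$ itself. With this reformulation, I would first prove a common reduction step valid in all three cases: whenever $\bar{C}$ is $\op{G}_{\Q_\infty}$-stable, it is in fact $\op{G}_\Q$-stable. The content of this step rests on the torsion-freeness of $\op{G}_\Q/\op{G}_{\Q_\infty}=\Gamma\cong\Z_p$. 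Concretely, if $\bar{C}$ were fixed by $\op{G}_{\Q_\infty}$, its $\op{G}_\Q$-stabilizer $H$ would be a closed subgroup of finite index in $\op{G}_\Q$ (because there are only $p+1$ lines in $\bar{A}$) containing the normal subgroup $\op{G}_{\Q_\infty}$; hence $H/\op{G}_{\Q_\infty}$ would be a finite subgroup of $\Z_p$, necessarily trivial, forcing $H=\op{G}_\Q$.

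With this reduction in hand, parts \eqref{p1 assumption ell curve} and \eqref{p2 assumption ell curve} follow by inspecting which lines of $\bar{A}$ are $\op{G}_\Q$-stable. In case \eqref{p1 assumption ell curve}, irreducibility of $\bar{\rho}_{E,p}$ rules out any $\op{G}_\Q$-stable line, so the reduction step forbids $\bar{C}$ from being $\op{G}_{\Q_\infty}$-stable. In case \eqref{p2 assumption ell curve}, reading off the matrix $\mtx{\varphi_1}{0}{*}{\varphi_2}$ in the stated basis, one has $\bar{\rho}_{E,p}(g)\cdot e_2=\varphi_2(g)e_2$, so $\F_p\cdot e_2$ is $\op{G}_\Q$-stable; on the other hand $\F_p\cdot e_1=\bar{C}$ would be $\op{G}_\Q$-stable only if the cocycle $*$ vanished identically, and $*\equiv 0$ would make $\bar{\rho}_{E,p}$ split as the direct sum $\varphi_1\oplus\varphi_2$, contradicting indecomposability. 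Hence $\bar{C}$ is not $\op{G}_\Q$-stable, and by the reduction it is not $\op{G}_{\Q_\infty}$-stable.

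For part \eqref{p3 assumption ell curve}, the matrix form $\mtx{\varphi_1}{*}{0}{\varphi_2}$ gives $\bar{\rho}_{E,p}(g)\cdot e_1=\varphi_1(g)e_1$ for every $g\in\op{G}_\Q$, so $\bar{C}=\F_p\cdot e_1$ is already $\op{G}_\Q$-stable, hence a non-zero $\op{G}_{\Q_\infty}$-submodule of $\bar{A}$ contained in $\bar{C}$, which directly violates Assumption~\ref{main assumption}. The only mildly delicate point is the matrix-convention bookkeeping in case \eqref{p2 assumption ell curve}, namely being careful about which coordinate line corresponds to which diagonal character; apart from that, the argument is essentially elementary, with all of the substantive content packaged into the torsion-freeness of $\Gamma$.
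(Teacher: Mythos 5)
Your reformulation of Assumption~\ref{main assumption} (as saying $\bar{C}$ is not $\op{G}_{\Q_\infty}$-stable) is correct, and part~\eqref{p3 assumption ell curve} is fine. But your ``common reduction step'' is misjustified, and the gap is not cosmetic: it hides the one place where arithmetic input is genuinely needed.

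You write that $H/\op{G}_{\Q_\infty}$ ``would be a finite subgroup of $\Z_p$, necessarily trivial, forcing $H=\op{G}_\Q$.'' This confuses \emph{finite} subgroups with \emph{finite-index} subgroups. Since $H$ has finite index in $\op{G}_\Q$ and contains $\op{G}_{\Q_\infty}$, what follows is that $H/\op{G}_{\Q_\infty}$ is a closed subgroup of $\Z_p$ of finite index. But $\Z_p$ is not simple: its proper finite-index closed subgroups are exactly the $p^n\Z_p$ with $n\geq 1$. Torsion-freeness of $\Gamma$ therefore does \emph{not} force $H=\op{G}_\Q$. The correct deduction is only that $[\op{G}_\Q:H]=[\Z_p:H/\op{G}_{\Q_\infty}]$ is a $p$-power; combined with the bound $[\op{G}_\Q:H]\leq p+1$ (the number of lines), the orbit of $\bar{C}$ has size $1$ or $p$, not necessarily $1$.

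This still suffices for part~\eqref{p1 assumption ell curve}: if the orbit has size $p$, the complementary line is $\op{G}_\Q$-stable, again contradicting irreducibility. (This orbit-counting argument is in fact a more elementary route than the paper's, which invokes Sutherland's classification of subgroups of $\op{GL}_2(\F_p)$ and a divisibility count involving $|\op{SL}_2(\F_p)|$.) For part~\eqref{p2 assumption ell curve}, however, the orbit-of-size-$p$ case is \emph{not} ruled out by indecomposability alone: in that scenario $\bar\rho_{E,p}(\op{G}_{\Q_\infty})$, being normal in $\bar\rho_{E,p}(\op{G}_\Q)$ and fixing both $\bar{C}$ and $\F_p e_2$, would fix every line in the $\op{G}_\Q$-orbit of $\bar{C}$ as well, hence all $p+1$ lines, hence act by scalars. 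That forces $\varphi_1|_{\op{G}_{\Q_\infty}}=\varphi_2|_{\op{G}_{\Q_\infty}}$, which is where the arithmetic must enter: $\varphi_1$ is ramified at $p$ while $\varphi_2$ is unramified, so $\varphi_2\varphi_1^{-1}\neq 1$, and since this character has order prime to $p$ it cannot factor through the pro-$p$ group $\Gamma$, so it remains nontrivial on $\op{G}_{\Q_\infty}$. Your write-up omits this step entirely, whereas it is precisely the content of the paper's proof of part~\eqref{p2 assumption ell curve}, packaged there as the vanishing of $\left(\F_p(\varphi_2\varphi_1^{-1})\right)^{\op{G}_{\Q_\infty}}$ in the inflation--restriction sequence. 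Without it, part~\eqref{p2 assumption ell curve} is not proved.
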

\begin{proof}
    \par We begin with part \eqref{p1 assumption ell curve}. We set 
    \[\mathcal{G}:=\bar{\rho}_{E,p}(\op{G}_{\Q})\text{ and }\mathcal{H}:=\bar{\rho}_{E,p}(\op{G}_{\Q_\infty}).\] Since $\Q_\infty/\Q$ is a $\Z_p$-extension, it follows that $\cH$ is a normal subgroup of $\cG$ of index $|\cG/\cH|=p^t$, where $t\in \Z_{\geq 0}$. If $\mathcal{G}=\mathcal{H}$, then, $\bar{\rho}_{E,p}$ remains irreducible when restricted to $\op{G}_{\Q_\infty}$. Therefore, in this case, $\bar{C}$ is not a $\op{G}_{\Q_\infty}$-submodule and the Assumption \ref{main assumption} holds. Therefore, we assume that $p$ divides $|\cG/\cH|$. Since $p$ divides $\cG$, it follows that either $\cG$ contains $\op{SL}_2(\F_p)$, or $\cG$ is contained in a Borel subgroup of $\op{GL}_2(\F_p)$ (cf. \cite[Proposition 3.1]{sutherland2016computing}). Since $\bar{\rho}_{E,p}$ is irreducible, $\cG$ is not contained in a Borel subgroup. Hence, $\cG$ contains $\op{SL}_2(\F_p)$. Suppose that $\cH$ is contained in a Borel subgroup; then we find that $|\cH|$ divides $p(p-1)^2$. On the other hand, since $\cG$ contains $\op{SL}_2(\F_p)$, it follows that $|\cG|$ is divisible by $ |\op{SL}_2(\F_p)|=(p^2-1)p$. Since $|G|=p^t |H|$, it follows that $(p^2-1)$ divides $p(p-1)^2$, which is a contradiction. Therefore, $\cH$ is not contained in a Borel subgroup and hence, the representation $\bar{\rho}_{E,p}$ is irreducible when restricted to $\op{G}_{\Q_\infty}$. Therefore, $\bar{A}$ does not contain any non-zero proper $\op{G}_{\Q_\infty}$ submodules, and this completes the proof of part \eqref{p1 assumption ell curve}.

    \par For the proof of part \eqref{p2 assumption ell curve} it suffices to show that $\bar{\rho}_{E,p}$ remains indecomposable even after restriction to $\op{G}_{\Q_\infty}$. We define a function $\beta: \op{Gal}(\Q_\Sigma/\Q)\rightarrow \F_p$ by setting $\beta(g)$ to denote the lower left entry of $\bar{\rho}_{E,p}$. It is easy to see that $\beta$ gives rise to a cocycle \[\beta \in Z^1\left(\Q_{\Sigma}/\Q, \F_p(\varphi_2\varphi_1^{-1})\right).\] Let $[\beta]\in H^1\left(\Q_{\Sigma}/\Q, \F_p(\varphi_2\varphi_1^{-1})\right)$ denote the corresponding cohomology class. Since it is assumed that $\bar{\rho}_{E,p}$ is indecomposable, it follows that $[\beta]$ is a non-zero cohomology class. In order to show that the restriction 
    \[\bar{\rho}_{E,p}:\op{Gal}(\Q_\Sigma/\Q_\infty)\rightarrow \op{GL}_2(\F_p)\]is indecomposable, it suffices to show that the restriction of $[\beta]$ to $H^1\left(\Q_{\Sigma}/\Q_\infty, \F_p(\varphi_2\varphi_1^{-1})\right)$ is non-zero. From the inflation restriction sequence, the kernel of the restriction map
    \begin{equation}\label{restriction map}H^1\left(\Q_{\Sigma}/\Q, \F_p(\varphi_2\varphi_1^{-1})\right)\rightarrow H^1\left(\Q_{\Sigma}/\Q_\infty, \F_p(\varphi_2\varphi_1^{-1})\right)\end{equation}
    is $H^1\left(\Q_\infty/\Q, \left(\F_p(\varphi_2\varphi_1^{-1})\right)^{\op{G}_{\Q_\infty}}\right)$. Since $\varphi_1$ is ramified at $p$ and $\varphi_2$ is unramified at $p$, we find that $\varphi_2\varphi_1^{-1}\neq 1$. Since $\op{Gal}(\Q_\infty/\Q)$ is a pro-$p$ extension and the character $\varphi_2\varphi_1^{-1}$ takes values in $\F_p^\times$, we find that the restriction of $\varphi_2\varphi_1^{-1}$ to $\op{G}_{\Q_\infty}$ is non-trivial. Therefore, $\left(\F_p(\varphi_2\varphi_1^{-1})\right)^{\op{G}_{\Q_\infty}}=0$ and the restriction map \eqref{restriction map} is injective. Hence, the restriction of $\beta$ to $H^1\left(\Q_{\Sigma}/\Q_\infty, \F_p(\varphi_2\varphi_1^{-1})\right)$ is non-zero. This proves that 
    \[\bar{\rho}_{E,p|\Q_\infty}:\op{G}_{\Q_\infty}\rightarrow \op{GL}_2(\F_p)\]is indecomposable. We therefore have shown that $\bar{C}$ is not a $\op{G}_{\Q_\infty}$-submodule of $\bar{A}$. This completes the proof of part \eqref{p2 assumption ell curve}.
    \par For part \eqref{p3 assumption ell curve}, observe that $\bar{C}$ is a $\op{G}_{\Q}$-submodule of $\bar{A}$. In particular, it is a $\op{G}_{\Q_\infty}$ submodule of $\bar{A}$, and the Assumption \ref{main assumption} is not satisfied.
\end{proof}

The residual Selmer group $S_{\bar{A}}^0(\Q_\infty)$ is contained in $S_{\bar{A}}(\Q_\infty)$; set \[\bar{S}_{\bar{A}}(\Q_\infty):=\frac{S_{\bar{A}}(\Q_\infty)}{S_{\bar{A}}^0(\Q_\infty)}.\] 
We postpone the proof of the above result till the end of this section. First, we introduce some further notation. Let $L=\Q(\bar{A})$ be the field cut out by the residual representation. In other words, $L$ is the field $\bar{\Q}^{\op{ker}\rho_{\bar{A}}}$, the field fixed by the kernel of the residual representation $\rho_{\bar{A}}$. We note that $\Q(\bar{A})$ is a finite Galois extension of $\Q$ and the Galois group $\op{Gal}(\Q(\bar{A})/\Q)$ is naturally isomorphic to the image of $\rho_{\bar{A}}$;  the representation $\rho_{\bar{A}}$ induces an isomorphism 
\[\op{Gal}(\Q(\bar{A})/\Q)\xrightarrow{\sim} \op{image}\rho_{\bar{A}}.\]
We observe that $\op{G}_L$ is the kernel of $\rho_{\bar{A}}$, and hence acts trivially on $\bar{A}$. Let $L_\infty:=L\cdot \Q_\infty$ be the cyclotomic $\Z_p$-extension of $L$. Let $\beta$ be the prime of $L_\infty$ that lies above $p$ that coincides with the choice of embedding $\iota_p$, and denote by $I_{\beta}$ the inertia group at $\beta$. Note that $I_{\beta}$ is contained in the inertia group $I_{\eta_p}$.  

\par We define a Selmer group $S_{\bar{A}}(L_\infty)$ associated to $(\bar{A}, \bar{C})$ over $L_\infty$. For each prime number $\ell$, we define a local condition $\cH_\ell(L_\infty, \bar{A})$. For $\ell\neq p$, set 
\[\cH_\ell(L_\infty, \bar{A}):=\prod_{\eta|\ell} H^1(L_{\infty, \eta}, \bar{A}),\]
where $\eta$ runs through all primes of $L_\infty$ that lies above $\ell$. We note that this is a finite set of primes. At the prime $p$, we set 
\[\cH_p(L_\infty, \bar{A}):=\left(\frac{H^1(L_{\infty, \beta}, \bar{A})}{\bar{L}_{\beta}}\right),\] where 
\[\bar{L}_{\beta}:=\op{ker}\left(H^1(L_{\infty, \beta}, \bar{A})\rightarrow H^1(I_{\beta}, \bar{D})\right).\]
Note that since $\rho_{\bar{A}}$ is unramified outside $\Sigma$, and hence, $L$ is contained in $\Q_{\Sigma}$. With respect to notation above, the residual Selmer group is defined as follows
    \[S_{\bar{A}}(L_\infty):=\op{ker}\left(H^1(\Q_\Sigma/L_\infty, \bar{A})\rightarrow \bigoplus_{\ell\in \Sigma} \cH_\ell(L_\infty, \bar{A})\right).\]
 We relate the two residual Selmer groups $S_{\bar{A}}(\Q_\infty)$ and $S_{\bar{A}}(L_\infty)$. We shall set $G:=\op{Gal}(L_\infty/\Q_\infty)$. We note that $\op{Gal}(\bar{\Q}/L)$ is the kernel of $\rho_{\bar{A}}$ and therefore, the Galois action of $\op{Gal}(\bar{\Q}/L)$ on $\bar{A}$ is trivial. We identify $H^1(\Q_\Sigma/L_\infty, \bar{A})$ with the group of homomorphisms $\op{Hom}\left(\op{Gal}(\Q_\Sigma/L_\infty), \bar{A}\right)$. For $g\in G$, take $\tilde{g}\in \op{Gal}(\Q_\Sigma/\Q_\infty)$ to be a lift of $g$. Take $\psi\in \op{Hom}\left(\op{Gal}(\Q_\Sigma/L_\infty), \bar{A}\right)$, we note that since $\bar{A}$ is abelian, $\psi(\tilde{g} x \tilde{g}^{-1})$ is independent of the choice of lift $\tilde{g}$. Define an action of $G$ on $\op{Hom}\left(\op{Gal}(\Q_\Sigma/L_\infty), \bar{A}\right)$, by setting
\[(g\cdot \psi)(x):=g^{-1}\psi(\tilde{g}x\tilde{g}^{-1}).\] Therefore, a homomorphism $\psi$ in $\op{Hom}\left(\op{Gal}(\Q_\Sigma/L_\infty), \bar{A}\right)^G$ is one which is $G$-equivariant, in the sense that 
\[\psi(\tilde{g}x\tilde{g}^{-1})=g\psi(x).\]Consider the inflation-restriction sequence
\begin{equation}\label{inf res seq}0\rightarrow H^1(G, \bar{A})\xrightarrow{inf} H^1(\Q_{\Sigma}/\Q_\infty, \bar{A})\xrightarrow{res} \op{Hom}\left(\op{Gal}(\Q_\Sigma/L_\infty), \bar{A}\right)^G.\end{equation}
 The restriction map 
    \[\op{res}: H^1(\Q_{\Sigma}/\Q_\infty, \bar{A})\rightarrow H^1(\Q_{\Sigma}/L_\infty, \bar{A})\] induces a map 
    \[\op{res}: S_{\bar{A}}(\Q_\infty)\rightarrow S_{\bar{A}}(L_\infty).\] Since $G$ is finite, $H^1(G,\bar{A})$ is finite, and thus the kernel of this restriction map is finite.

    \par We let $S_{\bar{A}}^{\op{nr}}(L_\infty)$ be the subspace of $S_{\bar{A}}(L_\infty)$ consisting of the classes that are unramified at $\beta$. Note that $S_{\bar{A}}(L_\infty)$ consists of homomorphisms 
    \[\psi:\op{Gal}(\Q_\Sigma/L_\infty)\rightarrow \bar{A}\] that satisfy the following conditions
    \begin{enumerate}
        \item $\psi$ trivial when restricted to the decomposition group of any prime $\eta$ of $L_\infty$ that lies above a prime $\ell\in \Sigma\backslash\{p\}$,
        \item $\psi(I_{\beta})$ is contained in $\bar{C}$.
    \end{enumerate}  
    The subset $S_{\bar{A}}^{\op{nr}}(L_\infty)$ consists of those classes for which $\psi(I_{\beta})=0$.

\begin{conjecture}\label{main conjecture generalized}
    Suppose that Assumption \ref{main assumption} holds, then, the image of the restriction map 
    \[ S_{\bar{A}}(L_\infty)^G\rightarrow \Hom\left(I_{\beta}, \bar{C}\right)\] is finite.
\end{conjecture}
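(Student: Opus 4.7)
The plan is to descend the problem from $L_\infty$ to $\Q_\infty$ via inflation--restriction, identify the restriction map with a global-to-local map in Greenberg Selmer theory, and then exploit Assumption \ref{main assumption} together with $G$-equivariance to cut the image down to a finite set.

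First, since $\op{G}_{L_\infty}$ acts trivially on $\bar{A}$, the inflation--restriction sequence \eqref{inf res seq} for the tower $\Q_\infty \subset L_\infty$ identifies $\Hom\bigl(\op{Gal}(\Q_\Sigma/L_\infty), \bar{A}\bigr)^G$ with $H^1(\Q_\Sigma/\Q_\infty, \bar{A})$ up to the finite kernel $H^1(G, \bar{A})$ and a finite cokernel inside $H^2(G, \bar{A})$. Under this identification I would show that $S_{\bar{A}}(L_\infty)^G$ corresponds, modulo finite error, to an imprimitive Selmer group over $\Q_\infty$ whose local condition at $\eta_p$ incorporates all $G$-conjugate ramification patterns $g\bar{C}$ at the various primes of $L_\infty$ above $p$. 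Under this translation, $\psi \mapsto \psi|_{I_\beta}$ becomes the composition $H^1(\Q_\Sigma/\Q_\infty, \bar{A}) \to H^1(\Q_{\infty,\eta_p}, \bar{A}) \to H^1(I_{\eta_p}, \bar{A})$, whose image lies inside the image of $H^1(I_{\eta_p}, \bar{C})$ via the long exact sequence of $0 \to \bar{C} \to \bar{A} \to \bar{D} \to 0$.

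Second, I would feed in Assumption \ref{main assumption} via $G$-equivariance. Since the $\op{G}_{\Q_\infty}$-action on $\bar{A}$ factors through $G$, the hypothesis says no non-zero subspace of $\bar{C}$ is $G$-stable, so the $G$-orbit $\{g\bar{C}\}_{g \in G}$ spans a strictly larger submodule of $\bar{A}$. The $G$-equivariance of any $\psi$ therefore imposes nontrivial compatibility between the local classes at distinct primes above $p$, and Poitou--Tate duality converts this into a dual Selmer-type constraint over $\Q_\infty$. My target would then be to identify the cokernel of $S_{\bar{A}}(L_\infty)^G \to \Hom(I_\beta, \bar{C})$, up to finite error, with a subquotient of the residual fine Selmer group $S_{\bar{A}}^0(\Q_\infty)$ and invoke the Coates--Sujatha-style finiteness results of Proposition \ref{prop 4.2} and Theorem \ref{CS main result} to conclude.

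The hard part will be making the last identification quantitative: without an auxiliary input such as $\mu_p(L) = 0$, the residual fine Selmer group may itself be infinite, so one cannot simply absorb the cokernel into it. I would expect the delicate step to be an explicit Kummer-theoretic description of $H^1(I_{\eta_p}, \bar{C})$ (using that the inertial action on $\bar{C}$ is the mod-$p$ cyclotomic character and that $\Q_{\infty,\eta_p}$ is deeply ramified), combined with an Euler-characteristic computation for $\op{G}_{\Q_\infty}$-modules, to show that only finitely many of the locally available Kummer classes are compatible with the $G$-symmetrized global Selmer condition. It is at this juncture that Assumption \ref{main assumption} must do the quantitative work, not merely forbid $\bar{C}$ from being stable.
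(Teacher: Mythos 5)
The statement you were asked to prove is Conjecture \ref{main conjecture generalized}; it is a \emph{conjecture} in the paper, not a theorem. The paper offers no proof of it and makes no claim to one. Rather, both main theorems (Theorem \ref{main thm} and Theorem \ref{main thm 2}) are explicitly \emph{conditional} on this conjecture (or its specialization, Conjecture \ref{main conjecture}). The paper's role for this statement is as a hypothesis, not a conclusion, and so there is no ``paper's own proof'' for your attempt to be compared against.

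As for your attempt on its own merits: you candidly identify the fatal gap yourself in the final paragraph, and you are right. You want to show that the image of $S_{\bar{A}}(L_\infty)^G \to \Hom(I_\beta, \bar{C})$ is finite by relating it, up to finite error, to the residual fine Selmer group $S_{\bar{A}}^0(\Q_\infty)$, and then invoking finiteness of the latter via Proposition \ref{prop 4.2} or Theorem \ref{CS main result}. But Proposition \ref{prop 4.2} only converts finiteness of $S_{\bar{A}}^0(\Q_\infty)$ into vanishing of the $\mu$-invariant of the fine Selmer group, and Theorem \ref{CS main result} requires both the pro-$p$ hypothesis and $\mu_p(L)=0$ as inputs --- none of which appear among the hypotheses of Conjecture \ref{main conjecture generalized}. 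Without $\mu_p(L)=0$, the residual fine Selmer group can in principle be infinite, so the quantitative endgame you are aiming for cannot close. Moreover, there is a structural circularity to be wary of: the whole point of the conjecture in the paper's architecture is that, via Theorem \ref{main 1}, it \emph{produces} the finiteness of $\bar{S}_{\bar{A}}(\Q_\infty) = S_{\bar{A}}(\Q_\infty)/S_{\bar{A}}^0(\Q_\infty)$, i.e.\ it is the ingredient that lets one transfer $\mu$-vanishing from the fine Selmer group to the full Greenberg Selmer group. Your proposed reduction runs this implication in reverse and then appeals to fine-Selmer finiteness, so even if the local computations went through, you would be assuming a close cousin of what you intend to prove. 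A minor additional point: in the second paragraph you speak of identifying the ``cokernel'' of the map with a subquotient of $S_{\bar{A}}^0(\Q_\infty)$, but what needs to be controlled is the \emph{image}; as written the sentence argues the wrong quantity, though I suspect this is a slip. The actual difficulty---showing, unconditionally from Assumption \ref{main assumption}, that $G$-equivariance forces ramification at $\beta$ to be almost entirely killed---is genuinely open, and the first two paragraphs of your proposal, while reasonable scaffolding, do not supply the idea that would replace the missing $\mu_p(L)=0$ input.
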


\begin{theorem}\label{main 1}
    Let $(A, C)$ be such that Assumption \ref{main assumption} holds for $(\bar{A}, \bar{C})$. Furthermore, assume that the Conjecture \ref{main conjecture generalized} is also satisfied. Then, the following assertions hold
    \begin{enumerate}
        \item\label{p1 main 1} $\bar{S}_{\bar{A}}(\Q_\infty):=\frac{S_{\bar{A}}(\Q_\infty)}{S_{\bar{A}}^0(\Q_\infty)}$ is finite.
        \item\label{p2 main 1} The $\mu$-invariant of $S_A(\Q_\infty)$ vanishes if and only if the $\mu$-invariant of $S_A^0(\Q_\infty)$ vanishes.
    \end{enumerate}
\end{theorem}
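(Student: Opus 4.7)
The plan is to derive part~\eqref{p2 main 1} from part~\eqref{p1 main 1}: combining finiteness of $\bar{S}_{\bar{A}}(\Q_\infty)$ with Propositions~\ref{prop 3.3} and~\ref{prop 4.2} (which identify vanishing of the $\mu$-invariants with finiteness of $S_{\bar{A}}(\Q_\infty)$ and of $S_{\bar{A}}^0(\Q_\infty)$ respectively) immediately gives the asserted equivalence, since $S_{\bar{A}}(\Q_\infty)$ is finite if and only if $S_{\bar{A}}^0(\Q_\infty)$ is. The bulk of the work is therefore part~\eqref{p1 main 1}. My approach is to construct a natural map $\Phi\colon \bar{S}_{\bar{A}}(\Q_\infty) \to \Hom(I_\beta, \bar{C})$ whose image is bounded by Conjecture~\ref{main conjecture generalized} and whose kernel is bounded by local duality estimates. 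Concretely, the localization-at-$p$ map on $S_{\bar{A}}(\Q_\infty)$ has image in $\bar{L}_{\eta_p}$ and kernel exactly $S_{\bar{A}}^0(\Q_\infty)$, yielding an injection $\bar{S}_{\bar{A}}(\Q_\infty) \hookrightarrow \bar{L}_{\eta_p}$. Composing with restriction to $L_{\infty, \beta}$ and then to $I_\beta$, and using that $G_L$ acts trivially on $\bar{A}$ to identify $H^1(I_\beta, \bar{A})$ with $\Hom(I_\beta, \bar{A})$, the defining vanishing of $\bar{L}_{\eta_p}$ in $H^1(I_{\eta_p}, \bar{D})$ forces the image to land in $\Hom(I_\beta, \bar{C})$, yielding $\Phi$.

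To show the image of $\Phi$ is finite, I check that $\Phi$ factors through the restriction $S_{\bar{A}}(\Q_\infty) \to S_{\bar{A}}(L_\infty)^G$ provided by the inflation-restriction sequence~\eqref{inf res seq}, followed by localization at $\beta$ into $\bar{L}_\beta$ and restriction to $I_\beta$; this is a routine check of the commutativity of the natural diagrams relating local and global cohomology under restriction. Once this factorization is in place, Conjecture~\ref{main conjecture generalized} bounds the image of $S_{\bar{A}}(L_\infty)^G$ in $\Hom(I_\beta, \bar{C})$ and hence the image of $\Phi$. To show the kernel of $\Phi$ is finite, I would factor the underlying restriction $H^1(\Q_{\infty, \eta_p}, \bar{A}) \to H^1(I_\beta, \bar{A})$ as
\[
H^1(\Q_{\infty, \eta_p}, \bar{A}) \longrightarrow H^1(L_{\infty, \beta}, \bar{A}) \longrightarrow H^1(I_\beta, \bar{A}).
\]
Inflation-restriction (using that $G_{L_{\infty, \beta}}$ acts trivially on $\bar{A}$) shows the first arrow has kernel $H^1(\op{Gal}(L_{\infty, \beta}/\Q_{\infty, \eta_p}), \bar{A})$, which is finite because the Galois group is. The kernel of the second arrow is $H^1(G_{k_\beta}, \bar{A})$, where $k_\beta$ denotes the residue field of $L_{\infty, \beta}$. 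Since $\bar{C}$ is acted on by inertia via the mod-$p$ cyclotomic character, one has $\Q(\mu_p) \subseteq L$, and because the cyclotomic $\Z_p$-extension of $\Q(\mu_p)$ is totally ramified at $p$, the residue field $k_\beta$ is finite. Hence $G_{k_\beta} \cong \widehat{\Z}$ acts trivially on $\bar{A}$ and $H^1(G_{k_\beta}, \bar{A})$ is finite. The kernel of $\Phi$, being contained in $\bar{L}_{\eta_p}$ intersected with this finite kernel, is therefore finite, and part~\eqref{p1 main 1} follows.

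The main obstacle I anticipate is not any single one of these steps in isolation, but rather the careful bookkeeping needed to verify that the diagrams intertwining the local-at-$p$ restriction maps over $\Q_\infty$ and over $L_\infty$ commute correctly, so that the image of $\Phi$ genuinely factors through the image of $S_{\bar{A}}(L_\infty)^G$ to which Conjecture~\ref{main conjecture generalized} applies. Once this compatibility is made rigorous, the remaining inflation-restriction and local cohomology estimates are standard, and part~\eqref{p2 main 1} then follows at once from part~\eqref{p1 main 1} as described above.
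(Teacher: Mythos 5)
Your argument is correct and takes essentially the same route as the paper: localize Selmer classes at $p$, restrict to $I_\beta$, factor through $S_{\bar{A}}(L_\infty)^G$ via the inflation--restriction sequence, invoke Conjecture~\ref{main conjecture generalized} to bound the image, and control the kernel of the local restriction by finiteness of the intervening Galois groups. The difference is purely organizational. The paper filters $S_{\bar{A}}^0(\Q_\infty)\subseteq S_{\bar{A}}^{\op{nr}}(\Q_\infty)\subseteq S_{\bar{A}}(\Q_\infty)$, shows the larger quotient is finite via the exact sequence \eqref{ses 5.4} and the commutative square into $H^1(I_\beta,\bar{C})$, and then asserts the smaller quotient is finite. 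You instead work directly with $\bar{S}_{\bar{A}}(\Q_\infty)=S_{\bar{A}}(\Q_\infty)/S_{\bar{A}}^0(\Q_\infty)$ by observing that the $\eta_p$-localization embeds it into $\bar L_{\eta_p}$, and build a single map $\Phi$ to $\Hom(I_\beta,\bar{C})$; you then bound the image using the conjecture, and bound the kernel by splitting $H^1(\Q_{\infty,\eta_p},\bar{A})\to H^1(I_\beta,\bar{A})$ through $H^1(L_{\infty,\beta},\bar{A})$ and using that both $\op{Gal}(L_{\infty,\beta}/\Q_{\infty,\eta_p})$ and the residue field $k_\beta$ are finite. This is a slightly more careful bookkeeping: it simultaneously accounts for both finite-index claims the paper makes (finiteness of $S_{\bar{A}}/S_{\bar{A}}^{\op{nr}}$ and of $S_{\bar{A}}^{\op{nr}}/S_{\bar{A}}^0$), and it sidesteps a small imprecision in the paper, namely that the map to $H^1(I_{\eta_p},\bar{C})$ is only canonical up to the image of $H^0(I_{\eta_p},\bar D)$, whereas your $\Phi$ into $\Hom(I_\beta,\bar C)$ is honestly well-defined because $I_\beta$ acts trivially on $\bar{A}$. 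The net mathematical content is the same, and your derivation of part~\eqref{p2 main 1} from part~\eqref{p1 main 1} via Propositions~\ref{prop 3.3} and \ref{prop 4.2} is exactly the paper's.
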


\begin{proof}[Proof of Theorem \ref{main 1}]
\par We shall set $S_{\bar{A}}^{\op{nr}}(\Q_\infty)$ to consist of all classes $f\in S_{\bar{A}}^{\op{nr}}(\Q_\infty)$ that are unramified at $\eta_p$. It is easy to see that $S_{\bar{A}}^{0}(\Q_\infty)$ is of finite index in $S_{\bar{A}}^{\op{nr}}(\Q_\infty)$. We begin by proving part \eqref{p1 main 1}. We have a short exact sequence 
\begin{equation}\label{ses 5.4}0\rightarrow S_{\bar{A}}^{\op{nr}}(\Q_\infty)\rightarrow S_{\bar{A}}(\Q_\infty)\rightarrow H^1(I_{\eta_p}, \bar{C}).\end{equation} Consider the commutative square

\[\begin{tikzcd}
 S_{\bar{A}}(\Q_\infty) \arrow[r] \arrow[d]
& H^1(I_{\eta_p}, \bar{C}) \arrow[d] \\
S_{\bar{A}}(L_\infty)^G \arrow[r]
& H^1(I_{\beta}, \bar{C}).
\end{tikzcd}\]
It follows from Conjecture \ref{main conjecture} that the image of the composed map

\[S_{\bar{A}}(\Q_\infty)\rightarrow H^1(I_{\beta}, \bar{C})\] is finite. Since $I_{\beta}$ has finite index in $I_{\eta_p}$, it follows that the kernel of the restriction map 
\[H^1(I_{\eta_p}, \bar{C})\rightarrow H^1(I_{\beta}, \bar{C})\] is finite. Therefore, we find that the image of 
\[S_{\bar{A}}(\Q_\infty)\rightarrow H^1(I_{\eta_p}, \bar{C})\] is finite. 
From the exact sequence \eqref{ses 5.4}, we deduce that $S_{\bar{A}}^{\op{nr}}(\Q_\infty)$ is of finite index in $S_{\bar{A}}(\Q_\infty)$. Therefore, $S_{\bar{A}}^{0}(\Q_\infty)$ is of finite index in $S_{\bar{A}}(\Q_\infty)$, and the statement of part \eqref{p1 main 1} follows from this. 
\par It follows from part \eqref{p1 main 1} that $S_{\bar{A}}(\Q_\infty)$ is finite if and only if $S_{\bar{A}}^0(\Q_\infty)$ is finite. Proposition \ref{prop 3.3} asserts that $S_{\bar{A}}(\Q_\infty)$ is finite if and only if the $\mu$-invariant of $S_{A}(\Q_\infty)$ is $0$. On the other hand, Proposition \ref{prop 4.2} asserts that $S_{\bar{A}}^0(\Q_\infty)$ is finite if and only if the $\mu$-invariant of $S_{A}^0(\Q_\infty)$ is $0$. Hence, the $\mu$-invariant of $S_{A}(\Q_\infty)$ is $0$ if and only if the $\mu$-invariant of $S_{A}^0(\Q_\infty)$ is $0$. This proves part \eqref{p2 main 1}.
\end{proof}

\begin{corollary}\label{cor 5.5}
    Let $E_{/\Q}$ be an elliptic curve with good ordinary reduction at an odd prime $p$. Assume that $\bar{\rho}_{E,p}$ is irreducible and Conjecture \ref{main conjecture generalized} is satisfied.. Then, the following are equivalent.
    \begin{enumerate}
        \item The $\mu$-invariant of $\op{Sel}_{p^\infty}(\Q_\infty, E)$ vanishes, i.e., Greenberg's conjecture holds.
        \item The $\mu$-invariant of $\op{Sel}_{p^\infty}^0(\Q_\infty, E)$ vanishes, i.e., the Conjecture \ref{CS conj} holds.
    \end{enumerate}
\end{corollary}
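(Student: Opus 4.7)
The plan is to deduce this corollary as a direct specialization of Theorem \ref{main 1} to the ordinary Galois representation attached to the $p$-adic Tate module of $E$, once Assumption \ref{main assumption} is verified in the elliptic curve setting.

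First I would set up the standard translation: take $M = T_p(E)$, $A = E[p^\infty]$, $\bar{A} = E[p]$, and let $\bar{C} \subset \bar{A}$ be the unique one-dimensional $\op{G}_p$-stable line on which the inertia subgroup at $p$ acts via the mod-$p$ cyclotomic character, which exists because $E$ has good ordinary reduction at $p$. Under these identifications the Greenberg Selmer group $S_A(\Q_\infty)$ coincides with $\op{Sel}_{p^\infty}(\Q_\infty, E)$ and the fine Selmer group $S_A^0(\Q_\infty)$ coincides with $\op{Sel}_{p^\infty}^0(\Q_\infty, E)$, as discussed in Section \ref{s 2}. Assumption \ref{basic ass} is immediate in this setting, and Assumption \ref{cotorsion hyp} is supplied by Kato's cotorsion theorem recalled in Section \ref{s 2}.

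Next I would verify Assumption \ref{main assumption} by invoking Proposition \ref{assumption ell curve} part \eqref{p1 assumption ell curve}: because $\bar{\rho}_{E,p}$ is irreducible, the group-theoretic argument there, based on the classification of subgroups of $\op{GL}_2(\F_p)$, shows that the restriction of $\bar{\rho}_{E,p}$ to $\op{G}_{\Q_\infty}$ remains irreducible, so the line $\bar{C}$ cannot contain any nonzero $\op{G}_{\Q_\infty}$-submodule of $\bar{A}$. Conjecture \ref{main conjecture generalized} is assumed by hypothesis, so both of the inputs required by Theorem \ref{main 1} are now in place.

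Finally, I would apply Theorem \ref{main 1} part \eqref{p2 main 1}, which asserts precisely that the $\mu$-invariant of $S_A(\Q_\infty)$ vanishes if and only if the $\mu$-invariant of $S_A^0(\Q_\infty)$ vanishes, and then unwind the identifications of the first step to rephrase this as the desired equivalence between the vanishing of $\mu_p(E)$ and the vanishing of the $\mu$-invariant of the fine Selmer group. There is essentially no substantive obstacle at this stage: the real analytic content has already been absorbed into Proposition \ref{assumption ell curve} and Theorem \ref{main 1}, and what remains is only the bookkeeping to confirm that the abstract ordinary pair $(A, C)$ attached to $E$ really does produce the classical Selmer and fine Selmer groups of $E$ over $\Q_\infty$.
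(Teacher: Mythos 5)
Your proposal is correct and follows the paper's own proof exactly: verify Assumption \ref{main assumption} via part \eqref{p1 assumption ell curve} of Proposition \ref{assumption ell curve}, then apply part \eqref{p2 main 1} of Theorem \ref{main 1}. The only difference is that you spell out the routine bookkeeping (the identifications $A=E[p^\infty]$, $\bar{A}=E[p]$, and the checking of Assumptions \ref{basic ass} and \ref{cotorsion hyp}) which the paper leaves implicit.
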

\begin{proof}
    Since $\bar{\rho}_{E, p}$ is irreducible, Proposition \ref{assumption ell curve} shows that the Assumption \ref{main assumption} is satisfied. The result therefore follows from part \eqref{p2 main 1} of Theorem \ref{main 1}.
\end{proof}
Let $E_{/\Q}$ be an elliptic curve and $p$ a prime at which $E$ has good ordinary reduction. Let \[\bar{\rho}_{E, p}:\op{G}_{\Q}\rightarrow \op{Aut}(E[p])\xrightarrow{\sim} \op{GL}_2(\F_p)\] be the residual representation on $E[p]$. The splitting field $\Q(E[p])$ is the field extension of $\Q$ which is fixed by the kernel of $\bar{\rho}_{E, p}$. 
\begin{proof}[Proof of Theorem \ref{main thm 2}]
    Since $\bar{\rho}_{E, p}$ is irreducible, it follows from Corollary \ref{cor 5.5} that the $\mu$-invariant of $\op{Sel}_{p^\infty}(\Q_\infty, E)$ is equal to $0$ if and only if the $\mu$-invariant of $\op{Sel}_{p^\infty}^0(\Q_\infty, E)$ is equal to $0$. 
    \par Consider the Galois representation 
    \[\rho_{E, p}:\op{G}_{\Q}\rightarrow \op{GL}_2(\Z_p)\] associated with the $p$-adic Tate module of $E$. The restriction of $\rho_{E, p}$ to $\op{G}_L$ is trivial modulo $p$. This is because $L$ is the splitting field $\Q(E[p]):=\bar{\Q}^{\ker\bar{\rho}_{E,p}}$, and $\op{G}_L$ is the kernel of $\bar{\rho}_{E,p}=\rho_{E,p}\mod{p}$. Therefore, the representation $\rho_{E,p}$ identifies $\op{Gal}\left(L(E_{p^\infty})/L\right)$ with a subgroup of 
    \[\widehat{\op{GL}_2}(\Z_p):=\ker\{\op{GL}_2(\Z_p)\rightarrow \op{GL}_2(\Z/p\Z)\}.\] It is easy to see that $\widehat{\op{GL}_2}(\Z_p)$ is a pro-$p$ group.\footnote{The author is willing to provide further details in support of this claim (if the referee insists).} Hence, the Galois group $\op{Gal}\left(L(E_{p^\infty})/L\right)$is a pro-$p$ group. Since it is assumed that the classical Iwasawa $\mu$-invariant $\mu_p(L)$ vanishes, it follows from Theorem \ref{CS main result} that $\op{Sel}_{p^\infty}^0(L_\infty, E)$ is cofinitely generated as a $\Z_p$-module. In other words, $\op{Sel}_{p^\infty}^0(L_\infty, E)$ is a cotorsion $\Lambda$-module whose $\mu$-invariant vanishes. It is easy to see that the kernel of the natural restriction map
    \[\op{Sel}_{p^\infty}^0(\Q_\infty, E)\rightarrow \op{Sel}_{p^\infty}^0(L_\infty, E)\]is cofinitely generated as a $\Z_p$-module, and hence the $\mu$-invariant of $\op{Sel}_{p^\infty}^0(\Q_\infty, E)$ is $0$. This completes the proof.
\end{proof}

\section{Residually reducible Galois representations arising from elliptic curves and Greenberg's conjecture}\label{s 6}

\par Throughout this section, we fix and elliptic curve $E_{/\Q}$ and an odd prime $p$ at which $E$ has good ordinary reduction. Let $M$ denote the $p$-adic Tate module of $E$. Recall that $\bar{A}$ is the mod-$p$ reduction of $M$, which we may identify with $E[p]$. The module $\bar{C}$ is the $1$-dimensional $\op{G}_p$-submodule which is ramified, and the quotient $\bar{D}:=\bar{A}/\bar{C}$ is unramified. The residual representation $\bar{\rho}_{E,p}=\rho_{\bar{A}}$ is the representation of $\op{G}_\Q$ on $\bar{A}$. We shall assume throughout this section that $\bar{A}$ is reducible as a Galois module. Choose a basis $(e_1, e_2)$ of $\bar{A}$ such that $\bar{C}=\F_p\cdot e_1$. Call such a basis \emph{admissible}; note that for any other admissible basis $(e_1', e_2')$, there are constants $c_1, c_2\in \F_p^\times$ and $d\in \F_p$ for which 
\[e_1'=c_1 e_1\text{ and }e_2'=c_2 e_2+de_1.\]
With respect to an admissible basis $(e_1, e_2)$, the restriction of $\bar{\rho}_{E,p}$ to the decomposition group at $p$ takes the form
\[\bar{\rho}_{E,p|\op{G}_p}=\mtx{\alpha\bar{\chi}}{\ast}{0}{\alpha^{-1}},\]
where $\alpha:\op{G}_p\rightarrow \F_p^\times$ is an unramified character and $\bar{\chi}$ is the mod-$p$ cyclotomic character. There are 3 possibilities for the representation $\bar{\rho}_{E,p}$. These are described below and all matrices are written with respect to an admissible basis $(e_1, e_2)$.
\begin{description}
    \item[Type 1] The representation $\bar{\rho}_{E, p}$ is upper triangular of the form $\mtx{\varphi_1}{\ast}{0}{\varphi_2}$, where $\varphi_1$ is odd and $\varphi_2$ is even.
     \item[Type 2] The representation $\bar{\rho}_{E, p}$ is upper triangular of the form $\mtx{\varphi_1}{\ast}{0}{\varphi_2}$, where $\varphi_1$ is even and $\varphi_2$ is odd.
     \item[Type 3] The representation $\bar{\rho}_{E, p}$ is indecompasable and lower triangular of the form $\mtx{\varphi_1}{0}{\ast}{\varphi_2}$. In this context, to be indecomposable means that there is no admissible basis with respect to which $\bar{\rho}_{E, p}$ is a direct sum of characters.
\end{description}
We note that $\varphi_{1|\op{G}_p}=\alpha\chi$ and $\varphi_{2|\op{G}_p}\simeq \alpha^{-1}$. Note that the Conjecture \ref{main conjecture generalized} specializes to the Conjecture \ref{main conjecture}.

\par The vanishing of the $\mu$-invariant of $\op{Sel}_{p^\infty}(\Q_\infty, E)$ shall be detected by the structure of the residual representation. We shall first recall a result of Schneider on isogenies between elliptic curves.
\par Given a finite Galois stable submodule $\alpha$ of $E[p^\infty]$, set $\alpha^+:=C\cap \alpha$; set
\[\delta(\alpha):=\ord_p|\alpha^+|-\ord_p|H^0(\mathbb{R}, \alpha)|.\] We note that since $p$ is assumed to be odd, the above definition coincides with that of \cite[Definition 2.1]{drinen2002finite}. In particular, it is easy to see that the quantities $\epsilon_v=0$ from \emph{loc. cit.} are trivial.
\begin{theorem}[Schneider]\label{isogeny schneider}
    Let $E$ and $E'$ be elliptic curves with good ordinary reduction at $p$ and $\phi: E\rightarrow E'$ an isogeny with kernel $\alpha$. then the difference between $\mu$-invariants is given by 
    \[\mu_p(E)-\mu_p(E')=\delta(\alpha).\]
    In particular, it follows that $\mu_p(E)\geq \delta(\alpha)$.
\end{theorem}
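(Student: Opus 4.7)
The plan is to derive the formula from a comparison of Selmer groups induced by the isogeny $\phi$, localizing the global $\mu$-invariant computation at each place and isolating the contributions that survive in $\mu$. From the short exact sequence of $\op{G}_\Q$-modules
\[0 \to \alpha \to E[p^\infty] \xrightarrow{\phi} E'[p^\infty] \to 0,\]
I would take Galois cohomology over $\Q_\Sigma/\Q_\infty$ and at each completion $\Q_{\infty,v}$ for $v\in\Sigma$, then assemble the local conditions into a Selmer diagram. Applying the snake lemma produces an exact sequence
\[0 \to K \to \op{Sel}_{p^\infty}(\Q_\infty, E) \to \op{Sel}_{p^\infty}(\Q_\infty, E') \to Q \to 0\]
in which both $K$ and $Q$ are expressible in terms of cohomology of the finite module $\alpha$.

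Since $\alpha$ is finite, $H^i(\Q_\Sigma/\Q_\infty, \alpha)$ and its local analogues are cofinitely generated over $\Z_p$ and hence contribute nothing to the $\mu$-invariant. The difference $\mu_p(E) - \mu_p(E')$ therefore equals the sum of the $\mu$-invariants of the local defects appearing in the Selmer diagram. At primes $v \nmid p$ in $\Sigma$, the Selmer condition is the full $H^1$ on both sides, so the defect reduces to a term controlled by $H^0(\Q_{\infty,v}, \alpha)$, which is $\Z_p$-cofinitely generated and hence has no $\mu$-contribution. At $v=p$, by contrast, the ordinary filtration yields canonical $\op{G}_p$-submodules $C\subset E[p^\infty]$ and $C'\subset E'[p^\infty]$, and the isogeny satisfies $\phi(C)=C'$ with $\ker(\phi|_C) = C\cap\alpha = \alpha^+$. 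A comparison of the short exact sequences $0\to C\to E[p^\infty]\to D\to 0$ and the corresponding sequence for $E'$ along the isogeny shows that the $\mu$-defect at $p$ equals $\ord_p|\alpha^+|$.

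The remaining correction comes from the archimedean place via the global Euler--Poincaré characteristic, which, since $p$ is odd and the local cohomology $H^i(\mathbb{R},-)$ vanishes for $i\geq 1$, contributes $-\ord_p|H^0(\mathbb{R},\alpha)|$ and yields the formula $\mu_p(E)-\mu_p(E') = \delta(\alpha)$. The positivity assertion $\mu_p(E)\geq \delta(\alpha)$ follows immediately from $\mu_p(E')\geq 0$. The principal obstacle is the careful bookkeeping at $v=p$: one must verify that $\phi$ respects the ordinary filtrations, which follows from the uniqueness of the $\op{G}_p$-stable line $\bar{C}$ under the ordinary hypothesis, and then perform a delicate snake-lemma chase on the diagram of $C$, $D$, $\alpha^+$, and $\alpha/\alpha^+$ to extract precisely $\ord_p|\alpha^+|$ rather than some cruder local invariant. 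Reducing first to the case where $\alpha$ is $p$-torsion, by factoring $\phi$ through isogenies of degree $p$ and invoking additivity of $\delta$ in short exact sequences of kernels, should simplify this step considerably.
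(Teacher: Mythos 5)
The paper does not actually prove this theorem; it simply cites Schneider's original article \cite{schneider1987mu} and Drinen's exposition \cite[Theorem 2.2]{drinen2002finite}, so there is no internal argument against which to compare your proposal in detail.

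That said, your sketch has a genuine gap that would stop the argument from working as written. The pivotal claim, ``Since $\alpha$ is finite, $H^i(\Q_\Sigma/\Q_\infty, \alpha)$ and its local analogues are cofinitely generated over $\Z_p$ and hence contribute nothing to the $\mu$-invariant,'' is false, and it also makes the argument internally inconsistent: you have just observed that the kernel $K$ and cokernel $Q$ in your snake-lemma sequence are built from cohomology of $\alpha$, so if every such term had trivial $\mu$-contribution you would conclude $\mu_p(E)=\mu_p(E')$ outright, contradicting the very formula you want. The source of the nonzero term $\ord_p|\alpha^+|$ is precisely that, for a finite $\op{G}_p$-module $\alpha$, $H^1(\Q_{\infty,\eta_p},\alpha)$ is \emph{not} cofinitely generated over $\Z_p$: its Pontryagin dual is a module over $\Lambda/\varpi\Lambda\cong\F_p\lb T\rb$ of positive rank (by the local Euler--Poincaré characteristic over the $\Z_p$-extension $\Q_{\infty,\eta_p}/\Q_p$), hence it has positive $\mu$-invariant as a $\Lambda$-module. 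Notice, in the same vein, the care taken in the paper's Lemma \ref{g finite kernel and cokernel lemma} to exploit the divisibility and unramifiedness of $D$ at $\eta_p$; no such care would be needed if cohomology of finite modules at $p$ were automatically negligible. Your later paragraphs do point in the right direction -- the ordinary filtration, the compatibility $\phi(C)=C'$, and the identification $\ker(\phi|_C)=\alpha^+$ are exactly the relevant structure -- but the framing (first declaring the $\alpha$-cohomology negligible, then attributing the $\mu$-difference to ``local defects'' coming from that same cohomology) is muddled, and the actual extraction of $\ord_p|\alpha^+|-\ord_p|H^0(\mathbb{R},\alpha)|$ is deferred to a ``delicate snake-lemma chase'' rather than carried out. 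To repair the argument you would need to compute the $\Lambda/\varpi$-coranks of the local terms at $\eta_p$ for $\alpha$, $\alpha^+$, and $\alpha/\alpha^+$ explicitly and track how they enter through the ordinary local condition, which is essentially what Schneider and Drinen do.
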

\begin{proof}
    We refer to \cite{schneider1987mu} or \cite[Theorem 2.2]{drinen2002finite} for the proof of the above result.
\end{proof}

We recall a result of Coates and Sujatha which will be of key importance in the proof of Greenberg's conjecture in the residually reducible case. 

\begin{theorem}[Coates and Sujatha]\label{CS thm}
    Let $E$ be an elliptic curve over $\Q$ such that $\rho_{\bar{A}}$ is a reducible Galois representation. Then, the $\mu$-invariant of the fine Selmer group $\op{Sel}_{p^\infty}^0(\Q_\infty, E)$ is equal to $0$. 
\end{theorem}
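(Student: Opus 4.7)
The plan is to combine Theorem \ref{CS main result} with the Ferrero-Washington theorem by passing to a suitable abelian base field over which the pro-$p$ condition on the $E_{p^\infty}$-extension is automatic. Since $\bar{\rho}_{E,p}$ is reducible, it may be written in upper-triangular form with diagonal characters $\varphi_1, \varphi_2 : \op{G}_\Q \to \F_p^\times$. Set $F := \Q(\varphi_1, \varphi_2)$ to be the finite abelian extension of $\Q$ cut out by the pair $(\varphi_1, \varphi_2)$. By the Ferrero-Washington theorem \cite{ferrero1979iwasawa}, $\mu_p(F) = 0$.

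First I would check that $F(E_{p^\infty})/F$ is a pro-$p$ extension. On $\op{G}_F$ both diagonal characters $\varphi_i$ are trivial by construction, so the restriction $\bar{\rho}_{E,p}|_{\op{G}_F}$ takes values in the unipotent subgroup of the standard Borel in $\op{GL}_2(\F_p)$; therefore $\op{Gal}(F(E[p])/F)$ has order dividing $p$. Since the kernel of the reduction map $\op{GL}_2(\Z_p)\to \op{GL}_2(\F_p)$ is itself pro-$p$, the extension $\op{Gal}(F(E_{p^\infty})/F)$ is pro-$p$. Applying Theorem \ref{CS main result} with base field $F$ now yields that $\op{Sel}_{p^\infty}^0(F_\infty, E)$ is cofinitely generated as a $\Z_p$-module, so in particular its $\mu$-invariant vanishes.

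To conclude, I would descend from $F_\infty$ back to $\Q_\infty$. Put $G := \op{Gal}(F_\infty/\Q_\infty)$, a finite abelian group. The inflation-restriction sequence, together with the fact that restriction sends fine Selmer classes to fine Selmer classes, produces a map
\[
\op{Sel}_{p^\infty}^0(\Q_\infty, E) \longrightarrow \op{Sel}_{p^\infty}^0(F_\infty, E)
\]
whose kernel embeds into $H^1\bigl(G, E[p^\infty]^{\op{G}_{F_\infty}}\bigr)$. Because $G$ is finite and the coefficient module is $\Z_p$-cofinitely generated, this $H^1$ is annihilated by $|G|$ and hence finite. Thus the restriction map has finite kernel, and since the target is $\Z_p$-cofinitely generated, so is $\op{Sel}_{p^\infty}^0(\Q_\infty, E)$; the vanishing of $\mu_p$ follows from the structure theorem of $\Lambda$-modules.

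The principal subtlety is the choice of auxiliary field. One is tempted to take $L = \Q(E[p])$ from the introduction, but this field can fail to be abelian over $\Q$ when $\bar{\rho}_{E,p}$ is indecomposable (type 3), so Ferrero-Washington would not apply directly to $L$. Passing instead to the smaller field $F$ cut out by the semisimplification $\varphi_1 \oplus \varphi_2$ ensures abelianness while still being small enough to enforce the pro-$p$ condition on $F(E_{p^\infty})/F$; after this replacement the remainder of the argument is routine Galois cohomology over a finite group.
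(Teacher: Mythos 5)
Your argument is essentially the paper's: write the residual representation in triangular form, pass to the abelian field $\mathcal{K}=\Q(\varphi_1,\varphi_2)$ cut out by the diagonal characters, invoke Ferrero--Washington to get $\mu_p(\mathcal{K})=0$, observe that $\op{Gal}(\mathcal{K}(E_{p^\infty})/\mathcal{K})$ lands in a pro-$p$ subgroup of $\op{GL}_2(\Z_p)$, apply Theorem~\ref{CS main result}, and descend from $\mathcal{K}_\infty$ to $\Q_\infty$. The only difference is at the descent step: since $\mathcal{K}/\Q$ is cut out by characters valued in $\F_p^\times$, the group $G=\op{Gal}(\mathcal{K}_\infty/\Q_\infty)$ has order prime to $p$, so the paper concludes $H^1(G, E[p^\infty]^{\op{G}_{\mathcal{K}_\infty}})=0$ outright; your weaker assertion that this $H^1$ is finite is also sufficient, though the inference ``annihilated by $|G|$ and hence finite'' glosses over the fact that the coefficient module is an infinite $p$-primary group, so a brief extra argument (e.g.\ that multiplication by $p^a$ kills $H^1(G,-)$ forces it to be a quotient of the finite group $H^1(G, M[p^a])$) should be supplied.
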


\begin{proof}
    We write $\rho_{\bar{A}}=\mtx{\varphi_1}{0}{\ast}{\varphi_2}$ with respect to some basis of $\bar{A}$. Let $\mathcal{K}=\Q(\varphi_1, \varphi_2)$ be the abelian extension of $\Q$ generated by $\varphi_1$ and $\varphi_2$. Let $\mathcal{K}(E_{p^\infty})$ be the extension generated by the $p$-primary torsion points of $E$. In other words, $\mathcal{K}(E_{p^\infty})$ is the field extension of $\mathcal{K}$ which is fixed by the kernel of $\rho_M$. Let $I$ be the subgroup of $\op{GL}_2(\Z_p)$ consisting of all matrices $A$ for which the mod-$p$ reduction is a unipotent lower triangular matrix $\mtx{1}{0}{\ast}{1}$. Via $\rho_M:\op{G}_{\Q}\rightarrow \op{GL}_2(\Z_p)$, the Galois group $\op{Gal}(\mathcal{K}(E_{p^\infty})/\mathcal{K})$ is identified with a subgroup of $I$. Since $I$ is a pro-$p$ group, so is the Galois group $\op{Gal}(\mathcal{K}(E_{p^\infty})/\mathcal{K})$. Recall that by the celebrated result of Ferrero and Washington \cite{ferrero1979iwasawa}, the classical Iwasawa $\mu$-invariant $\mu_p(\mathcal{K})$ vanishes, since $\mathcal{K}$ is an abelian extension of $\Q$. It then follows from Theorem \ref{CS main result} that $\op{Sel}_{p^\infty}^0(\mathcal{K}_\infty, E)$ is cofinitely generated as a $\Z_p$-module. The kernel of the restriction map 
    \[\op{Sel}_{p^\infty}^0(\mathcal{\Q}_\infty, E)\rightarrow \op{Sel}_{p^\infty}^0(\mathcal{K}_\infty, E)\] is contained in $H^1(H, E(\mathcal{K}_\infty)[p^\infty])$, where $H=\op{Gal}(\mathcal{K}_\infty/\Q_\infty)$. Since $H$ has order prime to $p$, it follows that this cohomology group vanishes. Therefore, $\op{Sel}_{p^\infty}^0(\mathcal{\Q}_\infty, E)$ is cofinitely generated as a $\Z_p$-module. In particular, the $\mu$-invariant of $\op{Sel}_{p^\infty}^0(\Q_\infty, E)$ is $0$.
\end{proof}

\begin{theorem}\label{thm 6.2}
Let $E_{/\Q}$ be an elliptic curve with good ordinary reduction at an odd prime $p$ for which the following conditions hold.
\begin{enumerate}
    \item The residual representation is reducible. 
    \item The Assumption \ref{main assumption} holds.
    \item The Conjecture \ref{main conjecture} holds.
\end{enumerate}Then, we find that $\mu_p(E)=0$.
\end{theorem}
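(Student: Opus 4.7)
The plan is to combine Theorem \ref{main 1} with Theorem \ref{CS thm}; the statement should follow essentially as a bookkeeping exercise, since the hypotheses are exactly tailored so that these two results can be chained together. For an elliptic curve $E/\Q$ with good ordinary reduction at $p$, take $A = E[p^\infty]$ and let $\bar{C} \subset E[p]$ be the unique one-dimensional $\op{G}_p$-submodule on which inertia at $p$ acts via the mod-$p$ cyclotomic character, so that $S_A(\Q_\infty)$ is identified with $\op{Sel}_{p^\infty}(\Q_\infty, E)$ and $S_A^0(\Q_\infty)$ with $\op{Sel}_{p^\infty}^0(\Q_\infty, E)$. Both are cotorsion over $\Lambda$, so Assumption \ref{cotorsion hyp} is in force and the $\mu$-invariants are well defined.

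First, I would invoke Theorem \ref{CS thm}: since $\bar\rho_{E,p}$ is assumed reducible, that theorem gives directly
\[
\mu\!\left(\op{Sel}_{p^\infty}^0(\Q_\infty, E)\right) = 0.
\]
Recall that this step rests on the Ferrero--Washington theorem applied to the abelian extension $\mathcal{K} = \Q(\varphi_1, \varphi_2)$ generated by the characters on the diagonal of $\bar\rho_{E,p}$, together with the equivalence of Coates and Sujatha recorded in Theorem \ref{CS main result}.

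Next, I would apply part \eqref{p2 main 1} of Theorem \ref{main 1}. That result requires Assumption \ref{main assumption}, which is granted by hypothesis (2), and Conjecture \ref{main conjecture generalized}; the paragraph preceding Theorem \ref{thm 6.2} observes that for elliptic curves the latter specializes to Conjecture \ref{main conjecture}, which is granted by hypothesis (3). Consequently Theorem \ref{main 1} delivers the equivalence
\[
\mu\!\left(\op{Sel}_{p^\infty}(\Q_\infty, E)\right) = 0 \iff \mu\!\left(\op{Sel}_{p^\infty}^0(\Q_\infty, E)\right) = 0,
\]
and combining with the previous step one concludes $\mu_p(E) = 0$. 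No genuine obstacle remains at this stage: all of the substantive work has been absorbed into the two invoked theorems, and in particular into the delicate comparison between the residual Greenberg Selmer group and the residual fine Selmer group that underlies Theorem \ref{main 1}, where Conjecture \ref{main conjecture} is used to force the ramified local classes at $\eta_p$ to have finite image.
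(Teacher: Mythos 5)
Your proof is correct and follows the same route as the paper: apply Theorem \ref{CS thm} to see that the fine Selmer group has vanishing $\mu$-invariant because the residual representation is reducible, then use part \eqref{p2 main 1} of Theorem \ref{main 1} (with Assumption \ref{main assumption} and Conjecture \ref{main conjecture} in force) to transfer this to $\op{Sel}_{p^\infty}(\Q_\infty,E)$. The paper simply states these two steps in the opposite order; the content is identical.
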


\begin{proof}
    The assumption \ref{main assumption} holds, and therefore, by part \eqref{p2 main 1} of Theorem \ref{main 1}, the $\mu$-invariant of $\op{Sel}_{p^\infty}(\Q_\infty, E)$ vanishes if and only if the $\mu$-invariant of $\op{Sel}_{p^\infty}^0(\Q_\infty, E)$ vanishes. Since the residual representation is reducible, it follows from Theorem \ref{CS thm} that the $\mu$-invariant of $\op{Sel}_{p^\infty}^0(\Q_\infty, E)$ is $0$, and the result follows from this. 
\end{proof}
Assuming that Conjecture \ref{main conjecture} holds for the isogeny class of $E$, we have a complete description for the $\mu=0$ condition based purely on the residual representation $\bar{\rho}_{E,p}$.
\begin{theorem}[$\mu=0$ condition]\label{mu=0 condition}\label{mu equals zero condition}
    Let $E_{/\Q}$ be an elliptic curve and $p$ an odd prime at which $E$ has good ordinary reduction. Assume that the Conjecture \ref{main conjecture} holds for all elliptic curves that are defined over $\Q$ and are $\Q$-isogenous to $E$. Then, $\mu_p(E)=0$ if and only if $\bar{\rho}_{E,p}$ is of type 2 or 3. Equivalently, $\mu_p(E)>0$ if and only if it is of type 1.
\end{theorem}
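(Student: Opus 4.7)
The plan is to prove the equivalence by exhausting the three possible types of a residually reducible $\bar\rho_{E,p}$ and handling each case directly. First I would verify that the classification into types 1, 2, and 3 is exhaustive: since $\det\bar\rho_{E,p}=\bar\chi$ is odd and $\det=\varphi_1\varphi_2$, exactly one of $\varphi_1,\varphi_2$ is odd, so every upper triangular admissible form falls into type 1 (if $\varphi_1$ is odd) or type 2 (if $\varphi_1$ is even); the only remaining possibility is an indecomposable lower triangular form, namely type 3 (a decomposable lower triangular form can be rewritten diagonally, hence as an upper triangular form). It then suffices to show that type 1 forces $\mu_p(E)>0$ while types 2 and 3 each force $\mu_p(E)=0$.

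For type 3, Proposition~\ref{assumption ell curve}\eqref{p2 assumption ell curve} verifies Assumption~\ref{main assumption}, so combining with Conjecture~\ref{main conjecture} (which, in particular, holds for $E$ by hypothesis), Theorem~\ref{thm 6.2} directly yields $\mu_p(E)=0$.

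For type 2, the submodule $\bar C=\F_p\cdot e_1$ is $\op{G}_\Q$-stable, and the character $\varphi_1$ through which it acts satisfies $\varphi_{1|\op{G}_p}=\alpha\bar\chi$, so it is ramified at $p$ (because $\bar\chi$ is), while being globally even by the definition of type 2. This is precisely the ``ramified and even'' case of the Greenberg--Vatsal theorem recalled in the related-work section, yielding $\mu_p(E)=0$ without any appeal to Conjecture~\ref{main conjecture}.

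For type 1, I would apply Schneider's formula (Theorem~\ref{isogeny schneider}) to the degree-$p$ isogeny $\phi\colon E\to E':=E/\bar C$ with kernel $\alpha=\bar C$. Since $\bar C\subset C$, we have $\alpha^+=C\cap\bar C=\bar C$, so $\ord_p|\alpha^+|=1$; and since $\varphi_1$ is odd in type 1, complex conjugation acts by $-1$ on $\bar C$, giving $H^0(\mathbb{R},\bar C)=0$ and $\ord_p|H^0(\mathbb{R},\bar C)|=0$. Hence $\delta(\bar C)=1$, and Schneider's formula yields $\mu_p(E)=\mu_p(E')+\delta(\bar C)=\mu_p(E')+1\geq 1>0$. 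The main subtlety in this plan lies only in the initial exhaustiveness check; once this is in place, the three case arguments reduce to direct applications of Greenberg--Vatsal, Theorem~\ref{thm 6.2}, and Schneider's formula respectively.
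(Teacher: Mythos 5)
Your proposal is correct and follows essentially the same three-case argument as the paper: Schneider's isogeny formula for type 1, Greenberg--Vatsal for type 2, and Proposition~\ref{assumption ell curve} together with Theorem~\ref{thm 6.2} for type 3. The one thing you add that the paper leaves implicit is the explicit verification that the three types are exhaustive (via the parity of $\det\bar\rho_{E,p}=\bar\chi$ and the observation that a decomposable form can always be taken upper triangular in an admissible basis), which is a worthwhile clarification.
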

\begin{proof}
\par First, we consider the case when $\bar{\rho}_{E,p}$ is of type $1$. Note that since the representation $\bar{\rho}_{E, p}$ is upper triangular, $\alpha:=\bar{C}$ is a $\op{G}_{\Q}$-submodule of $E[p]$. Since $\varphi_1$ is odd, $H^0(\mathbb{R}, \alpha)=0$. Then, we find that $\delta(\alpha)=1$, and it follows from Theorem \ref{isogeny schneider} (or \cite[Theorem 2.1]{drinen2002finite}) that $\mu_p(E)\geq \delta(\alpha)\geq 1$. 

\par Next, we consider type $2$ representations. Greenberg and Vatsal \cite{greenberg2000iwasawa} showed that if $E[p]$ contains a $1$-dimensional $\op{G}_{\Q}$-stable subspace which is ramified at $p$ and even or unramified at $p$ and odd, then, $\mu_p(E)=0$. In this case, $\bar{C}$ is a subspace which is $\op{G}_{\Q}$-stable, ramified at $p$ and even, and therefore, their result applies to show that $\mu_p(E)=0$.

\par Finally, consider the type $3$ representations. Note that $\varphi_2$ is unramified at $p$. Thus, if $\varphi_2$ is odd, then the aforementioned result of Greenberg and Vatsal applies to show that $\mu_p(E)=0$. For type $3$ representations for which $\varphi_2$ is even however, it was expected that $\mu_p(E)=0$ should hold, however, not proved. We complete the proof by noting that Proposition \ref{assumption ell curve} implies that when $\bar{\rho}_{E, p}$ is of type $3$, the Assumption \ref{main assumption} holds. Then it follows from Theorem \ref{thm 6.2} that $\mu_p(E)=0$.
\end{proof}
We now give the proof of our main theorem.

\begin{proof}[Proof of Theorem \ref{main thm}]
If $\mu:=\mu_p(E)=0$, then the result is vacuously true, setting $E':=E$. Therefore, assume without loss of generality that $\mu>0$. Thus, it follows from Theorem \ref{mu=0 condition} that $\bar{\rho}_{E, p}$ is of type 1, i.e., $\bar{C}$ is an odd $\op{G}_{\Q}$-submodule of $E[p]$. In this case, setting $\alpha:=\bar{C}$, we observe that $\delta(\alpha)=1$ (see the first paragraph in the proof of Theorem \ref{mu=0 condition}). We set $E_1:=E/\alpha$. It follows from Theorem \ref{isogeny schneider} that
\[\mu_p(E_1)=\mu_p(E)-\delta(\alpha)=\mu-1.\]
In this way, we obtain a sequence of elliptic curves $E=E_0,E_1, E_2, \dots, E_{\mu}$ over $\Q$ along with isogenies $\phi_i: E_{i-1}\rightarrow E_i$ such that $\mu_p(E_i)=\mu-i$. Set $E':=E_{\mu}$ and consider the composite isogeny
\[E\xrightarrow{\phi_1}E_1\xrightarrow{\phi_2}E_2\xrightarrow{\phi_3}\dots \rightarrow E'.\]
We find that $\mu(E')=\mu-\mu=0$. This completes the proof.
\end{proof}

\bibliographystyle{alpha}
\bibliography{references}

\begin{thebibliography}{EPW06}

\bibitem[CS05]{coates2005fine}
John Coates and Ramdorai Sujatha.
\newblock Fine {S}elmer groups of elliptic curves over $p$-adic {L}ie
  extensions.
\newblock {\em Mathematische Annalen}, 331:809--839, 2005.

\bibitem[Dri02]{drinen2002finite}
Michael~J Drinen.
\newblock Finite submodules and {I}wasawa $\mu$-invariants.
\newblock {\em Journal of Number Theory}, 93(1):1--22, 2002.

\bibitem[Dri03]{drinen2003iwasawa}
Michael~J Drinen.
\newblock Iwasawa $\mu$-invariants of elliptic curves and their symmetric
  powers.
\newblock {\em Journal of Number Theory}, 102(2):191--213, 2003.

\bibitem[DRS23]{DRS2023}
Shaunak~V. Deo, Anwesh Ray, and R.~Sujatha.
\newblock On the {$\mu$} equals zero conjecture for fine {S}elmer groups in
  {I}wasawa theory.
\newblock {\em Pure Appl. Math. Q.}, 19(2):641--680, 2023.

\bibitem[EPW06]{emerton2006variation}
Matthew Emerton, Robert Pollack, and Tom Weston.
\newblock Variation of {I}wasawa invariants in hida families.
\newblock {\em Inventiones mathematicae}, 163:523--580, 2006.

\bibitem[FW79]{ferrero1979iwasawa}
Bruce Ferrero and Lawrence~C Washington.
\newblock The {I}wasawa invariant $\mu_p$ vanishes for abelian number fields.
\newblock {\em Annals of {M}athematics}, pages 377--395, 1979.

\bibitem[Gre99]{greenbergmain}
Ralph Greenberg.
\newblock Iwasawa theory for elliptic curves.
\newblock In {\em Arithmetic theory of elliptic curves ({C}etraro, 1997)},
  volume 1716 of {\em Lecture Notes in Math.}, pages 51--144. Springer, Berlin,
  1999.

\bibitem[GV00]{greenberg2000iwasawa}
Ralph Greenberg and Vinayak Vatsal.
\newblock On the {I}wasawa invariants of elliptic curves.
\newblock {\em Inventiones mathematicae}, 142(1):17--63, 2000.

\bibitem[Hac04]{hachimori2004mu}
Yoshitaka Hachimori.
\newblock On the $\mu$-invariants in iwasawa theory of elliptic curves.
\newblock {\em Japanese journal of mathematics. New series}, 30(1):1--28, 2004.

\bibitem[Kat04]{kato2004p}
Kazuya Kato.
\newblock p-adic {H}odge theory and values of zeta functions of modular forms.
\newblock {\em Ast{\'e}risque}, 295:117--290, 2004.

\bibitem[Kob03]{kobayashi2003iwasawa}
Shin-ichi Kobayashi.
\newblock Iwasawa theory for elliptic curves at supersingular primes.
\newblock {\em Inventiones mathematicae}, 152(1):1--36, 2003.

\bibitem[Maz72]{mazur1972rational}
Barry Mazur.
\newblock Rational points of abelian varieties with values in towers of number
  fields.
\newblock {\em Inventiones mathematicae}, 18(3-4):183--266, 1972.

\bibitem[Och00]{ochiai2000control}
Tadashi Ochiai.
\newblock Control theorem for {B}loch--{K}ato's selmer groups of p-adic
  representations.
\newblock {\em Journal of Number Theory}, 82(1):69--90, 2000.

\bibitem[RS21]{ray2021mu}
Anwesh Ray and R~Sujatha.
\newblock On the $\mu$-invariants of residually reducible {G}alois
  representations.
\newblock {\em arXiv preprint arXiv:2104.10797}, 2021.

\bibitem[Sch87]{schneider1987mu}
Peter Schneider.
\newblock The $\mu$-invariant of isogenies.
\newblock {\em J. Indian Math. Soc.(NS)}, 52:159--170, 1987.

\bibitem[Sut16]{sutherland2016computing}
Andrew~V Sutherland.
\newblock Computing images of {G}alois representations attached to elliptic
  curves.
\newblock In {\em Forum of Mathematics, Sigma}, volume~4, page~e4. Cambridge
  University Press, 2016.

\bibitem[SW18]{SujathaWitte}
R.~Sujatha and M.~Witte.
\newblock Fine {S}elmer groups and isogeny invariance.
\newblock In {\em Geometry, algebra, number theory, and their information
  technology applications}, volume 251 of {\em Springer Proc. Math. Stat.},
  pages 419--444. Springer, Cham, 2018.

\bibitem[Tri05]{trifkovic2005vanishing}
Mak Trifkovi{\'c}.
\newblock On the vanishing of $\mu$-invariants of elliptic curves over
  $\mathbb{Q}$.
\newblock {\em Canadian Journal of Mathematics}, 57(4):812--843, 2005.

\bibitem[Was97]{washington1997introduction}
Lawrence~C Washington.
\newblock {\em Introduction to cyclotomic fields}, volume~83.
\newblock Springer Science \& Business Media, 1997.

\end{thebibliography}

\end{document}